\definecolor{blue}{RGB}{0,0,255}
\newcommand{\new}[1]{\textcolor{black}{#1}}
\tikzset{my loop/.style =  {to path={
  \pgfextra{}
  [looseness=12,min distance=6mm]
  \tikz@to@curve@path},font=\sffamily\small
  }}  
\newtheorem{theorem}{Theorem}
\newtheorem{lemma}[theorem]{Lemma}
\newtheorem*{cora}{Corollary 6a}
\newtheorem*{corb}{Corollary 6b}
\theoremstyle{definition}
\newtheorem{ex}{Example}
\newtheorem*{df}{Definition}
\theoremstyle{remark}
\newtheorem{rem}{Remark}
\newtheorem*{ack}{Acknowledgements}
\newcommand*{\R}{\mathbb{R}}
\newcommand*{\N}{\mathbb{N}}
\newcommand*{\C}{\mathbb{C}}
\newcommand*{\G}{\mathcal{G}}
\newcommand*{\ind}{\text{ind}}
\newcommand*{\eps}{\varepsilon}
\title{Zero-free regions of partition functions with applications to algorithms and graph limits}
\author{Guus Regts\footnote{University of Amsterdam. Email: \texttt{guusregts@gmail.com.}}}
\begin{document}
\maketitle
\abstract{
Based on a technique of Barvinok \cite{B14a,B14b,B15} and Barvinok and Sober\'on \cite{BS14a,BS14b} we identify a class of edge-coloring models whose partition functions do not evaluate to zero on bounded degree graphs. 
Subsequently we give a quasi-polynomial time approximation scheme for computing these partition functions. 
As another application we show that the normalised partition functions of these models are continuous with respect \new{to} the Benjamini-Schramm topology on bounded degree graphs. 
We moreover give quasi-polynomial time approximation schemes for evaluating a large class of graph polynomials, including the Tutte polynomial, on bounded degree graphs.

\begin{footnotesize}
Keywords: Benjamini-Schramm convergence, approximation algorithms, partition functions, sparse graph limits.
\end{footnotesize}
}
\section{Introduction}
In a series of papers Barvinok \cite{B14a,B14b,B15} and Barvinok and Sober\'on \cite{BS14a,BS14b} developed a technique that yields quasi-polynomial time approximation schemes for several hard counting problems such as computing the permanent of a matrix and computing the number of (edge-colored) graph homomorphisms into a fixed graph.
Their technique is essentially based on two things. 
First the counting problem is \new{cast} as the evaluation of some polynomial and they identify a region where this polynomial is nonzero. 
Secondly, they prove that in this zero-free region the logarithm of the polynomial is well-approximated by a low order Taylor approximation, which they prove to be computable in quasi-polynomial time.

In this paper we will apply this method to partition functions of edge-coloring models and more generally to Holant counting problems and contractions of tensor networks. 
That is, fixing a degree bound $\Delta$, we will identify edge-coloring models for which the partition function does not vanish on graphs with maximum degree at most $\Delta$ and consequently show how to approximately compute the partition function in quasi-polynomial time. 
We moreover show that the normalised logarithm of the partition function is continuous with respect to the Benjamini-Schramm topology for these models, and we give give quasi-polynomial time algorithms for approximately evaluating a large class of graph polynomials, including the Tutte polynomial and the matching polynomial, on bounded degree graphs. 

Below we will introduce and describe these concepts in a bit more detail and state our results.

\subsection{Partition functions of edge-coloring models}
Edge-coloring models originate in statistical physics and their partition functions have been introduced to the graph theory community by de la Harpe and Jones \cite{HJ} (where they are called vertex models).
We call any map $h:\N^k\to \C$ a \emph{$k$-color edge-coloring model}. 
(In this paper we will fix $k\geq2$ and just speak about edge-coloring models).
For a graph $G=(V,E)$, the \emph{partition function} of $h$ is defined by 
\begin{equation}\label{eq:def pf}
p(G)(h):=\sum_{\phi:E\to[k]}\prod_{v\in V}h(\phi(\delta(v))),
\end{equation}
\new{where $\delta(v)$ denotes the multiset of edges incident with the vertex $v$ and} where $\phi(\delta(v))$ denotes the multiset of colors that the vertex $v$ `sees', which we identify with its incidence vector in $\N^k$ so that we can apply $h$ to it. 
Let us give some examples.
\begin{ex}\label{eq:pf}
\begin{itemize}
\item
Let $h:\N^2\to\C$ be defined by $h(\new{\alpha_1,\alpha_2})=1$ if $\alpha_1\leq 1$ and zero otherwise for $\new{(\alpha_1,\alpha_2)}\in \N^2$.
Then $p(G)(h)$ is equal to the number of matchings of the graph $G$.
\new{This can be easily seen as follows: the edges receiving color $1$ correspond to a matching if and only if the coloring gives a contribution to the sum} \eqref{eq:def pf}.
\item \new{Fix $d\in \N$ and} let $h:\N^{2}\to \C$ be defined by $h(\new{\alpha_1,\alpha_2})=1$ if $\alpha_1=d$ and zero otherwise for $\new{(\alpha_1,\alpha_2})\in \N^{2}$.
Then $p(G)(h)$ is equal to the number of $d$-regular subgraphs of $G$ on the same vertex set.
\end{itemize}
See \cite{S7} and below for more examples.
\end{ex}

Interchanging the role of edges and vertices in \eqref{eq:def pf} one obtains partition functions of vertex-coloring models.
Let $a\in \C^n$ such that $a_i\neq 0$ for all $i$ and let $B\in \C^{n\times n}$ be a symmetric matrix. We call the pair $(a,B)$ a \emph{vertex-coloring model} (In \cite{HJ} this is called a \emph{spin model} provided $a$ is the all ones vector). For a graph $G=(V,E)$ the partition function of $(a,B)$ is defined by
\begin{equation}\label{eq:def pf vcm}
p(G)(a,B):=\sum_{\phi:V\to[n]}\prod_{v\in V}a_{\phi(v)}\prod_{uv \in E}B_{\phi(u),\phi(v)}.
\end{equation}
If $a$ is the all-ones vector and $B$ is the adjacency matrix of some graph $H$, then $p(G)(a,B)$ is equal to the number of graph homomorphisms from $G$ to $H$.

Partition functions of edge-coloring models include partition function of vertex-coloring models, as has been shown by Szegedy \cite{S7,S10}, \new{cf. Lemma \ref{lem:edge vertex} in Section \ref{sec:def}.} 
But they form a much bigger class of graph invariants, cf. \cite{DGLRS}. For example, the number of matchings is not equal to the partition function of any vertex-coloring model.
Partition functions of edge-coloring models have been characterised in \cite{S7} and \cite{DGLRS} based on invariant theory of the orthogonal group and the Nu\new{l}lstellensatz. 

In many cases it is NP-hard to determine whether or not the partition function of an edge- or vertex-coloring model vanishes on general graphs. 
However, when restricted to bounded degree graphs things may become easier.
In particular, Barvinok and Sober\'on \cite{BS14a} proved that when $a$ is the all ones vector and $B$ is sufficiently close to the all ones matrix, then 
\begin{equation}
\text{$p(G)(a,B)\neq0$ for all graphs of maximum degree at most $\Delta$.}\label{eq:BS vcm}
\end{equation}
Based on the technique of Barvinok \cite{B14a,B14b,B15} and Barvinok and Sober\'on \cite{BS14a,BS14b} we will prove in Section \ref{sec:absence} a similar result for partition functions of edge-coloring models:
\begin{theorem}\label{thm:absence main}
\new{Let $\Delta\in \N$ then there exists a constant $\beta>0.35$ such that for any $k$-color edge-coloring model $h$ that satisfies $|h(\alpha)-1|\leq \beta/(\Delta+1)$ for all $\alpha\in \N^k$ with $|\alpha|\leq \Delta$ we have for any graph with maximum degree at most $\Delta$, $p(G)(h)\neq 0$.}
\end{theorem}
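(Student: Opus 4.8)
The plan is to follow the Barvinok interpolation philosophy: introduce a one-parameter family of edge-coloring models interpolating between the trivial model (all values $1$) and the target model $h$, show the partition function does not vanish along the whole path, and then conclude $p(G)(h)\neq 0$ by a connectedness/induction argument rather than a direct calculation. Concretely, write $h=\mathbf{1}+g$ where $|g(\alpha)|\leq\beta/(\Delta+1)$ for all $\alpha$ with $|\alpha|\leq\Delta$, and for $z\in\C$ set $h_z:=\mathbf{1}+zg$, so that $p(G)(h_z)$ is a polynomial in $z$ with $p(G)(h_0)=k^{|E|}\neq 0$ and $p(G)(h_1)=p(G)(h)$. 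It suffices to prove $p(G)(h_z)\neq 0$ for all $|z|\leq 1$ when $G$ has maximum degree at most $\Delta$; in fact one proves the slightly stronger statement that $p(G)(h_z)\neq 0$ on a disk of radius a bit larger than $1$, which gives the room needed for the inductive step.

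The main step is an induction on the number of vertices of $G$, controlling not just nonvanishing but a quantitative ratio. The natural invariant to track is, for a graph $G$ with a distinguished vertex $v$ of degree $d\leq\Delta$, the ratio $p(G)(h_z)/p(G-v)(h_z)$ (or a suitable variant where $G-v$ is replaced by $G$ with the edges at $v$ deleted), and one shows by induction that this ratio lies in a prescribed region of $\C$ — typically a disk around $1$, or a sector — whose radius is governed by $\beta$. Expanding $p(G)(h_z)$ over the $k^d$ colorings of the edges at $v$, each term factors through the partition function of a graph obtained by deleting $v$ and appending pendant (colored) edges to its neighbours; writing $h_z=\mathbf{1}+zg$ and using that $zg$ has absolute value at most $\beta/(\Delta+1)$, one expresses the ratio at $G$ as a perturbation of the corresponding ratios at the smaller graphs. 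The constraint that makes everything close up is a fixed-point / contraction condition on the region, and the condition $\beta>0.35$ should emerge precisely as the threshold below which the relevant self-map of the region is well-defined; the factor $1/(\Delta+1)$ rather than $1/\Delta$ accounts for the fact that each neighbour of $v$ may itself have up to $\Delta-1$ other edges plus the new pendant edge, so one is really summing $\Delta+1$ terms' worth of perturbation at the worst vertex.

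I expect the main obstacle to be the bookkeeping in the inductive step: one must choose the region (disk versus sector, centred where, of what radius as a function of $\beta$ and $\Delta$) so that (i) it contains the base-case values, (ii) it is preserved under the recursion coming from vertex deletion with all $k^d$ color classes summed and all neighbour-ratios in the region, and (iii) it stays bounded away from $0$ uniformly in $\Delta$. Getting a $\Delta$-independent constant $\beta$ — rather than one that degrades as $\Delta\to\infty$ — is the delicate point, and it is exactly here that the normalisation by $\Delta+1$ is used: the total perturbation at a vertex is a sum of $d\leq\Delta$ neighbour contributions each of size $O(\beta/(\Delta+1))$ (plus lower-order cross terms), which stays below the critical radius uniformly. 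Once the inductive invariant is established, nonvanishing of $p(G)(h)=p(G)(h_1)$ is immediate by taking the product of the ratios down a vertex-elimination ordering, each factor being nonzero.
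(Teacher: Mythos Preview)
Your plan is in the right spirit but differs substantially from the paper's argument, and as written it has a structural gap.

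\textbf{What the paper actually does.} The interpolation $h_z=\mathbf{1}+zg$ is \emph{not} used to prove nonvanishing; that device appears only later, for the algorithm. Theorem~\ref{thm:absence main} is deduced from a direct statement (Theorem~\ref{thm:absence of roots}) proved by induction on $|E\setminus F|$, where $F\subseteq E$ is a set of edges whose colours have been \emph{pinned} by some $\phi:F\to[k]$. For each such pair $(F,\phi)$ one considers the partial partition function $p^F_\phi(G)(h)$, and the induction simultaneously maintains three statements: (i) $p^F_\phi(G)(h)\neq 0$; (ii) for any vertex $u$, any two extensions $\psi,\psi'$ of $\phi$ to $F\cup\delta(u)$ give values $p^{F\cup\delta(u)}_\psi$ and $p^{F\cup\delta(u)}_{\psi'}$ whose \emph{angle} differs by at most a fixed $\theta<2\pi/3$; (iii) a derivative bound linking $|p^F_\phi|$ to $\sum_\alpha|h^v_\alpha|\,|\partial p^F_\phi/\partial x^v_\alpha|$. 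The geometric input is Bukh's lemma: a sum of nonzero complex numbers with pairwise angle at most $\theta$ has modulus at least $\cos(\theta/2)$ times the sum of the moduli. The factor $\Delta+1$ arises because when you change the pinning at $u$ you perturb $h^v$ for each $v\in N(u)\cup\{u\}$, and the constant $0.35$ comes from $\tfrac{1}{2}\max_{\theta\in(0,2\pi/3)}\theta\cos(\theta/2)\approx 0.56/ (1+\ldots)$, i.e.\ from optimising the angle parameter, not from a contraction radius.

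\textbf{Where your recursion stalls.} Your proposed invariant is the ratio $p(G)/p(G-v)$ (or a variant), propagated by vertex deletion. But once you expand over the $k^d$ colourings of $\delta(v)$, the terms are partition functions of $G-v$ with \emph{different} pendant boundary conditions at the neighbours of $v$; they are not all multiples of a single $p(G-v)$. So the ``ratio at $G$'' is not a function of a single ``ratio at a smaller graph'' but of $k^d$ pinned partition functions that you have no inductive control over in your setup. To close the loop you would have to carry boundary data through the induction --- i.e.\ work with the objects $p^F_\phi(G)$ --- and then the natural thing to control is the discrepancy between different pinnings, which is exactly the angle statement (ii) above. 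In other words, making your vertex-deletion scheme precise forces you into the paper's edge-pinning framework; the ratio-in-a-disk picture does not survive as stated.
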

In fact, we will state and prove in Section \ref{sec:absence} a slightly more general version of Theorem \ref{thm:absence main} that also applies to tensor networks. The definition of tensor networks will be given in Section \ref{sec:prel}.

\begin{rem}\label{rem:vertex-coloring}
While every partition function of a vertex-coloring model is equal to the partition function of some edge-coloring model, the exact relation between Theorem \ref{thm:absence main} and \eqref{eq:BS vcm} is presently not clear. 
It is possible though to derive a qualitative version of \eqref{eq:BS vcm} from Theorem \ref{thm:absence main} (even valid for vertex vertex-coloring models with vertex weights close to $1$). We will however not do this here. 
Instead, in Section \ref{sec:absence} we will present an example of a vertex-coloring model to which the result of Barvinok and Sober\'on does not apply, while from our results it follows that its partition function does not evaluate to zero on bounded degree graphs, cf. Example \ref{ex:ind}.
\end{rem}

\subsection{Computing partition functions}
{\bf Exact computation} Computing partition functions is in many cases $\#$P-hard. 
There is a line of work by Dyer and Greenhill \cite{DG00}, Bulatov and Grohe \cite{BG05}, Cai, Chen and Lu \cite{CCL13}, where dichotomy theorems have been proved for the complexity of computing partition functions of vertex-coloring models. 
Roughly they proved that if the model has a very special structure, its partition function can be computed in polynomial time and it is $\#$P-hard to compute otherwise.
In a sequence of papers Cai Lu and Xia \cite{CLX09,CLX11}, Cai, Huang and Li \cite{CHL10}, and Cai, Guo and Williams \cite{CGW13} have proved similar results for partition functions of edge-coloring models with $2$ colors.\footnote{In fact, in these papers they consider \emph{Holant counting problems}, but they include partition functions of edge-coloring models. These Holant counting problems may be seen as tensor network contractions and we will define these in Section \ref{sec:tensor}.} \\
\quad \\
\noindent {\bf Approximate computation}
Since computing partition functions is in many case $\#$P-hard, people have started looking at approximation algorithms. 
Let us consider the case of counting weighted independent sets.
For a graph $G$ and $\lambda>0$, denote by $p_\lambda(G)$ the sum over all independent sets of $G$, where an independent set $I$ is counted with weight $\lambda^{|I|}$. 
Note that $p_{\new{\lambda}}(G)$ may be viewed as the partition function of $a=(\new{\lambda},1)$ and 
$B=\left (\begin{array}{cc}0&1\\1&1\end{array}\right)$. 
A surprising threshold phenomenon has been found for the complexity of \new{approximating} $p_\lambda(G)$: for $\lambda<\lambda_c(\Delta):=(\Delta-1)^{\Delta-1}/(\Delta-2)^{\Delta}$ there exists a fully-polynomial time approximation scheme (FPTAS) for graphs of maximum degree at most $\Delta$, discovered by Weitz \cite{W6}; for $\lambda>\lambda_c$ there does not exist a fully-polynomial time randomised approximation scheme (FPRAS) for $\Delta$-regular graphs (with $\Delta\geq 3$), unless NP=RP, as has been proved by Sly and Sun \cite{SS12}. 
The threshold $\lambda_c$ is the critical point for the (infinite) $\Delta$-regular tree to have a unique Gibbs measure. 
This phenomenon suggests an intimate connection between phase transitions in statistical physics and computational complexity.
\new{ The ideas of Weitz have been applied and extended to other types of counting problems and partition functions; see e.g. \cite{BGKNT7,GK12,LY13,SST14}. Other interesting approximation algorithms and hardness results have been found for partition functions of vertex-coloring models an planar graphs \cite{GJM15}. }
\\
\quad \\
\noindent{\bf Our contribution}
In Section \ref{sec:algorithms} of this paper we apply Theorem \ref{thm:absence main} to obtain a quasi-polynomial time approximation scheme (QPTAS) for the computation of partition functions of a large class of edge-coloring models on bounded degree graphs, based on a technique developed by Barvinok \cite{B14a}. 
This technique was used by Barvinok and Sober\'on \cite{BS14a} to obtain a QPTAS for the computation of partition function of vertex-coloring models on bounded degree graphs (with $a$ equal to the all ones vector).

The main idea is to rewrite the partition function as the evaluation of a univariate polynomial $q$ at $1$ and then use a low order Taylor approximation to $\ln q$. 
The approximation turns out to be good when the roots of $q$ have absolute value at least $M$ for some fixed constant $M>1$. 
Theorem \ref{thm:absence main} is used to guarantee the existence of this constant $M$.
From the perspective of the Lee-Yang approach to phase transition in statistical physics \cite{LY52}, one may view zero-free regions of the partition function as regions far away from a phase transition. 
So this approach also indicates a connection between statistical physics and computational complexity.

We need one definition to state our result.
\begin{df}
Let $\eps>0$ and $a\in \C$. We call $b\in \C$ a \emph{multiplicative $\eps$-approximation} to $a$ if 
$e^{-\eps}\leq \left |\frac{a}{b}\right|\leq e^{\eps}$ and if the angle between $a$ and $b$ is at most $\eps$.
We call $b\in \C$ an \emph{additive $\eps$-approximation} to $a$ if $|a-b|\leq \eps$.
\end{df}
We prove the following result in Section \ref{sec:algorithms}:
\begin{theorem}\label{thm:algorithms}
Let $\Delta\in \N$. Then there exists a constant $\delta>0$ (we may take $\delta=0.35/(\Delta+1)$) such that for any $\eps>0$, any edge-coloring model $h$ that satisfies $|h(\alpha)-1|\leq \delta$ for each $\alpha\in \N^k$ and any graph $G=(V,E)$ of maximum degree at most $\Delta$, we can compute a multiplicative $\eps$-approximation to $p(G)(h)$ in time $|V|^{O(\ln |V|-\ln \eps)}$ and an additive $\eps|V|$-approximation to $\ln|p(G)(h)|$ in time $|V|^{O(\ln1/\eps)}$.
\end{theorem}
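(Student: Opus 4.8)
The plan is to apply Barvinok's polynomial interpolation method \cite{B14a}, in the same way Barvinok and Sober\'on \cite{BS14a} treat vertex-coloring models, using Theorem \ref{thm:absence main} to supply the zero-free region. First I would pass to the function $g:=h-\mathbf 1$ (so $|g(\alpha)|\le\delta$ for every $\alpha$) and, for $z\in\C$, introduce the edge-coloring model $h_z(\alpha):=1+z\,g(\alpha)$, so that $h_0\equiv 1$ and $h_1=h$. Put
\[q(z):=p(G)(h_z)=\sum_{\phi\colon E\to[k]}\ \prod_{v\in V}\bigl(1+z\,g(\phi(\delta(v)))\bigr).\]
This is a polynomial of degree at most $n:=|V|$ with $q(0)=k^{|E|}$ and $q(1)=p(G)(h)$. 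The only arguments of $h_z$ that appear are multisets $\alpha$ with $|\alpha|=\deg(v)\le\Delta$, and for these $|h_z(\alpha)-1|=|z|\,|g(\alpha)|\le|z|\,\delta=|z|\cdot 0.35/(\Delta+1)$. Hence Theorem \ref{thm:absence main} gives $q(z)\ne 0$ whenever $|z|\le R:=\beta/0.35$, and since $\beta>0.35$ this closed disk has radius $R>1$ and contains the point $1$.

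Next I would use the standard estimate (see \cite{B14a}): for a polynomial $q$ of degree at most $n$ with $q(0)\ne 0$ that is zero-free on $\{\,|z|\le R\,\}$, the degree-$m$ Taylor polynomial $T_m$ of the branch of $\ln q$ normalised by $T_m(0)=|E|\ln k$ obeys $|\ln q(1)-T_m(1)|\le \tfrac{n}{(m+1)(R-1)R^{m}}$. As $R>1$ is an absolute constant, this drops below $\eps$ once $m=O(\ln n-\ln\eps)$ and below $\eps n$ once $m=O(\ln 1/\eps)$. Then $\exp(T_m(1))$ is a multiplicative $\eps$-approximation of $p(G)(h)$ in the first regime, while $\operatorname{Re}T_m(1)$ is an additive $\eps n$-approximation of $\ln|p(G)(h)|$ in the second.

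It remains to compute $T_m(1)=|E|\ln k+\sum_{j=1}^{m}c_j$, where $c_j$ is the $j$-th Taylor coefficient of $\ln q$ at $0$. Writing $q(z)=\sum_j q_jz^j$, the $c_j$ are recovered from $q_0,\dots,q_m$ by $O(m^2)$ arithmetic operations via the recursion $j q_j=\sum_{i=1}^{j} i\,c_i\,q_{j-i}$ (legitimate because $q_0=k^{|E|}\ne 0$), so it suffices to compute $q_0,\dots,q_m$. Expanding the product gives $q_j=\sum_{S\in\binom{V}{j}}\sum_{\phi}\prod_{v\in S}g(\phi(\delta(v)))$; for a fixed $S$ the edges not meeting $S$ contribute a factor $k$ each, and the at most $j\Delta$ edges meeting $S$ are summed over at most $k^{j\Delta}$ colourings, so the inner sum costs $k^{O(j\Delta)}$. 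Since there are at most $n^{j}$ sets $S$, computing $q_0,\dots,q_m$ takes time $n^{O(m)}k^{O(m\Delta)}=n^{O(m)}$ because $k$ and $\Delta$ are fixed; substituting the two choices of $m$ yields the running times $|V|^{O(\ln|V|-\ln\eps)}$ and $|V|^{O(\ln 1/\eps)}$. The hard part is exactly this final step: one must control simultaneously the number of Taylor coefficients required — which is only logarithmic precisely because the radius $R$ in Theorem \ref{thm:absence main} is a constant bounded away from $1$ — and the cost of each coefficient, and then check that the arithmetic can be performed with enough precision at only polylogarithmic overhead.
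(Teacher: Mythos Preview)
Your proposal is correct and follows essentially the same approach as the paper: define the univariate polynomial $q(z)=p(G)(I+z(h-I))$, use Theorem~\ref{thm:absence main} (the paper uses its Corollary~6a, which is the same content) to get a zero-free disk of radius $R>1$, apply Barvinok's Taylor bound for $\ln q$, and compute the low-order coefficients $q_j$ by expanding over subsets $S\subseteq V$ of size $j$ and summing over colorings of the $\le j\Delta$ incident edges. The recursion you use to pass from the $q_j$ to the Taylor coefficients of $\ln q$ is exactly the paper's Lemma~\ref{lem:p implies f}, and your running-time analysis $n^{O(m)}k^{O(m\Delta)}=n^{O(m)}$ matches the paper's computation verbatim.
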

Theorem \ref{thm:algorithms} also applies to contractions of tensor networks and Holant counting problems. 
See Section \ref{sec:algorithms} for the exact details.

\subsection{Evaluating graph polynomials}
It turns out that the technique of Barvinok can also be applied to graph polynomials of exponential type. 
This is a class of graph polynomials introduced by Csikv\'ari and Frenkel \cite{CF12}, which includes many important polynomials such as the Tutte polynomial, the matching polynomial and many more.
We will define these polynomials in Section \ref{sec:exp type}, where we obtain a QPTAS for computing certain evaluations of these polynomials on bounded degree graphs, cf. Theorem \ref{thm:exp type approx}.
Let us for now specialise these results to the Tutte polynomial.

For a graph $G = (V, E)$ the random cluster model formulation of the \emph{Tutte polynomial} $Z(G)(\new{q}, v)$, is defined as follows:
\begin{equation}
Z(G)(\new{q}, v) := \sum_{A\subseteq E}\new{q}^{k(A)}v^{|A|}, 	\label{eq:tutte}
\end{equation}
where $k(A)$ denotes the number of connected components of the graph $(V, A)$. 
We will consider the Tutte polynomial for fixed $v$ as a polynomial in $\new{q}$.
\begin{theorem}\label{thm:tutte}
Let $\Delta\in \N$ and let $v_0\in \C$. Then there exists a constant $\new{C=C(\Delta,v_0)}$ such that for any $\eps>0$, any graph $G=(V,E)$ of maximum degree at most $\Delta$ and any $q\in \C$ with $|q|>C$, we can compute a multiplicative $\eps$-approximation to $Z(G)(q,\new{v_0})$ in time $|V|^{O(\ln|V|-\ln \eps)}$ and an additive $\eps|V|$-approximation to $\ln |Z(G)(q,\new{v_0})|$ in time $|V|^{O(\ln1/\eps)}$.
\end{theorem}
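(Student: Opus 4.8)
The plan is to reduce the Tutte polynomial, at fixed $v=v_0$ and viewed as a polynomial in $q$, to the general framework for graph polynomials of exponential type (Theorem \ref{thm:exp type approx}), and then to invoke that theorem's zero-free region and QPTAS. First I would recall the classical fact that the random-cluster partition function \eqref{eq:tutte} has exponential type in the sense of Csikv\'ari and Frenkel: writing $Z(G)(q,v_0)=\sum_{A\subseteq E}q^{k(A)}v_0^{|A|}$, the coefficients factor multiplicatively over connected components of $G$, so that $Z(G_1\sqcup G_2)(q,v_0)=Z(G_1)(q,v_0)\cdot Z(G_2)(q,v_0)$, and the generating function over connected spanning subgraphs plays the role of the ``connected'' part. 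Concretely, I would exhibit the coefficients $e_j(G)$ in $Z(G)(q,v_0)=\sum_{j} e_j(G)\,q^{j}$ as sums over spanning subgraphs with exactly $j$ components, each weighted by $v_0^{(\text{number of edges})}$, and verify the exponential-type identity (the exponential formula relating $Z$ to its connected version). This identifies $Z(G)(\cdot,v_0)$ as a polynomial of exponential type to which Theorem \ref{thm:exp type approx} applies, with all the per-vertex data bounded in terms of $\Delta$ and $v_0$ only.

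Next I would extract the constant $C=C(\Delta,v_0)$. Theorem \ref{thm:exp type approx} guarantees a radius-type bound: there is a constant, depending only on $\Delta$ and the bound on the connected contributions (which in turn depends only on $\Delta$ and $|v_0|$, since connected subgraphs on $n$ vertices of a degree-$\Delta$ graph are controlled by a $\Delta$-dependent exponential count), outside of which the polynomial $q\mapsto Z(G)(q,v_0)$ has no zeros, uniformly over graphs $G$ of maximum degree at most $\Delta$. Setting $C$ equal to this constant, for $|q|>C$ we are in the zero-free region. The zero-freeness is exactly what powers the Barvinok-type argument: substituting and rescaling, one writes $Z(G)(q,v_0)$ (for the target $q$ with $|q|>C$) as the value at $1$ of a univariate polynomial $g(z)$ whose zeros all have modulus at least $M>1$; then $\ln g$ is well approximated near $z=1$ by its Taylor polynomial of degree $O(\ln(|V|/\eps))$, and the Taylor coefficients — which are combinations of the low-order coefficients $e_j(G)$, equivalently weighted counts of small connected spanning subgraphs — are computable in time $|V|^{O(\ln|V|-\ln\eps)}$ by brute-force enumeration of connected subgraphs on $O(\ln(|V|/\eps))$ vertices in a bounded-degree graph. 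Exponentiating yields the multiplicative $\eps$-approximation to $Z(G)(q,v_0)$; taking real parts of the Taylor polynomial gives the additive $\eps|V|$-approximation to $\ln|Z(G)(q,v_0)|$ in the claimed $|V|^{O(\ln 1/\eps)}$ time, since for the latter the degree of the Taylor truncation need only be $O(\ln 1/\eps)$.

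The main obstacle I anticipate is not the algorithmic machinery — that is a black-box application of Barvinok's method once zero-freeness is in hand — but rather ensuring that the hypotheses of Theorem \ref{thm:exp type approx} are met uniformly, i.e.\ that the per-vertex/per-component weights coming from $Z(\cdot,v_0)$ satisfy the required boundedness with a constant depending only on $\Delta$ and $v_0$. This amounts to a careful estimate: the number of connected subgraphs of a degree-$\Delta$ graph on a given vertex that span $m$ vertices is at most $(e\Delta)^{m}$ or so, and each carries weight at most $|v_0|^{(\text{edges})}\le |v_0|^{\Delta m/2}$ (or $1$ if $|v_0|\le 1$), so the connected generating function has a radius of convergence bounded below by a positive constant $c(\Delta,v_0)$; then $C$ can be taken proportional to $1/c(\Delta,v_0)$. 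A secondary subtlety is bookkeeping the $q$-degree: $Z(G)(q,v_0)$ has degree equal to the number of connected components of $G$ (at most $|V|$), which is exactly the setting the exponential-type theorem is designed for, so the rescaling $q\mapsto 1$ works cleanly. Once these estimates are in place, Theorem \ref{thm:tutte} follows as the special case $p=Z(\cdot,v_0)$ of Theorem \ref{thm:exp type approx}.
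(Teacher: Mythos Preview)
Your approach is essentially the paper's: verify that $q\mapsto Z(G)(q,v_0)$ is an exponential-type graph polynomial, obtain a uniform bound $C=C(\Delta,v_0)$ on the moduli of its roots over $\G_\Delta$, and then invoke Theorem~\ref{thm:exp type approx}. The paper does exactly this, citing Csikv\'ari--Frenkel for the exponential-type structure and Jackson--Procacci--Sokal \cite{JPS13} directly for the root bound; you instead propose to obtain the root bound by checking the \emph{bounded} exponential type hypothesis and applying the Csikv\'ari--Frenkel root-location theorem. That also works (indeed the paper notes that the Tutte polynomial is of bounded exponential type), and your sketch of the estimate---counting connected subgraphs through a fixed vertex and bounding the edge weight by $|v_0|^{\Delta t/2}$---is the right shape for producing an $R(\Delta,v_0)$. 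Either route yields the required $C$.

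Two small corrections. First, a misattribution: Theorem~\ref{thm:exp type approx} does \emph{not} itself supply the root bound; it takes the bound $c$ as a hypothesis. What you are actually invoking for the zero-free region is the separate Csikv\'ari--Frenkel theorem on bounded exponential type stated just before it. Second, the degree of $Z(G)(q,v_0)$ in $q$ is $|V|$, not the number of connected components of $G$: the top term comes from $A=\emptyset$, where $k(\emptyset)=|V|$ and the coefficient is $1$, so the polynomial is monic of degree $|V|$ with $\chi(K_1)=1$, exactly as Theorem~\ref{thm:exp type approx} requires. Your re-derivation of the Barvinok Taylor machinery is unnecessary, as it is already packaged in Theorem~\ref{thm:exp type approx}; once exponential type, efficiency of $\chi$, and the root bound are in hand, the conclusion is immediate.
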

In \cite{GJ12} Goldberg and Jerrum showed that approximating $Z(G)(\new{q},v)$ for general graphs and for $\new{q}>2$ and $v>0$ is as hard as counting independent sets in bipartite graphs (\#BIS). 
The present results may be an indication that this is not the case for bounded degree graphs.
See \cite{LY13} for related algorithmic results for the Potts model and \cite{GSVY14} for hardness results for the Potts model on bounded degree graphs.

\subsection{Sparse graph limits}
In 2001, Benjamini and Schramm \cite{BS11} defined a convergence notion for finite \new{bounded degree planar} graphs, which \new{can also be extended to general bounded degree graphs} and is often referred to as \emph{Benjamini-Schramm convergence}, \emph{local convergence} or \emph{weak convergence}.
This convergence notion has many equivalent definitions (see the book by Lov\'asz \cite[Proposition 5.6]{L12}) and we choose one which is most suitable for our purposes. 
Fix a degree bound $\Delta$ and let $(G_n)$ be a sequence of simple graphs, each of maximum degree at most $\Delta$.
We call the sequence $(G_n)$ \emph{locally convergent} if $|V(G_n)|$ goes to infinity, as $n\to\infty$ and if for each connected simple graph $H$ the quantity
\begin{equation}
\frac{\text{ind}(H,G_n)}{|V(G_n)|}	\label{eq:def convergence}
\end{equation}
is a convergent sequence of real numbers, where in \eqref{eq:def convergence} $\text{ind}(H,G_n)$ denotes the number of induced subgraphs of $G_n$ that are isomorphic to $H$.
As an example let $C_n$ be the cycle of length $n$. 
Then $(C_n)$ is locally convergent. 
See \cite{L12} for possible representations of limit objects.

Let $\G_\Delta$ be the collection of simple graphs of maximum degree at most $\Delta$.
Call a graph parameter $f:\G_\Delta\to \C$ \emph{estimable} if for any locally convergent sequence of graph\new{s} $(G_n)\subset \G_{\Delta}$, $(f(G_n))$ is a convergent sequence of numbers.
In Section \ref{sec:sparse limits} we look at the estimability of the \emph{normalised partition function} $n(G)(h)$ of an edge-coloring model $h$, which is defined as  
\begin{equation}
n(G)(h):=\frac{\ln|p(G)(h)|}{|V(G)|},\label{eq:normalised pf}
\end{equation}
where $G$ is a graph.
This normalisation is motivated by statistical physics, where this quantity is known as the \emph{entropy per vertex}.
Note from the perspective of estimability it does not matter to take the absolute value of $p(G)(h)$ before applying the logarithm. For if we fix a some branch of the (complex) logarithm, then the imaginary part is bounded, and dividing it by $|V(G_n)|$ will make it disappear in the limit. 

Borgs, Chayes, Kahn and Lov\'asz \cite{BCKL12} showed that if one in \eqref{eq:normalised pf} replaces $p(G)(h)$ by $p(G)(a,B)$ for appropriate $(a,B)$ this parameter is estimable; see also \cite{LML15}.
In \cite{CF12} Csikv\'ari and Frenkel showed that if one replaces $p(G)(h)$ by $q(G)(z)$ in \eqref{eq:normalised pf}, where $q$ is a graph polynomial of bounded exponential type and $|z|$ is large enough, then this parameter is estimable. 
This extends work of Ab\'ert and Hubai \cite{AH12} who showed that this holds for the chromatic polynomial.

Utilising Theorem \ref{thm:absence main} and building on a result of Csikv\'ari and Frenkel \cite{CF12}, we will prove that the normalised partition function of an edge-coloring model that is sufficiently close to the all ones edge-coloring model is also estimable. 
\begin{theorem}\label{thm:sparse limit}
\new{Let $\Delta\in \N$. Then there exists a constant $\beta>0.35$ such that for any edge-coloring model $h$ that satisfies $|h(\alpha)-1|\leq \beta/(\Delta+1)$ for each $\alpha\in \N^k$, the normalised partition function $n(\cdot)(h):\G_\Delta\to \R$ is estimable.}
\end{theorem}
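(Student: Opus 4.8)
The plan is to reduce the estimability of $n(\cdot)(h)$ to two facts: the estimability of the edge density, and the theorem of Csikv\'ari and Frenkel \cite{CF12} on graph polynomials of bounded exponential type, whose applicability is secured by Theorem \ref{thm:absence main} --- the point being that Theorem \ref{thm:absence main} pushes the roots of the relevant polynomial uniformly inside the unit disk, so that the evaluation point $z=1$ lies outside the root region. Write $g:=h-\mathbf 1$ and, for $x\in\C$, set
\begin{equation*}
\tilde q_G(x):=\frac{x^{|V(G)|}}{k^{|E(G)|}}\,p(G)(\mathbf 1+x^{-1}g)=\sum_{m=0}^{|V(G)|}q_m(G)\,x^{|V(G)|-m},
\end{equation*}
where the coefficients arise from expanding $\prod_{v}(1+x^{-1}g(\phi(\delta(v))))$ in \eqref{eq:def pf}; in particular $q_0(G)=1$ and $\tilde q_G(1)=p(G)(h)/k^{|E(G)|}$, so that $n(G)(h)=\frac{|E(G)|}{|V(G)|}\ln k+\frac{\ln|\tilde q_G(1)|}{|V(G)|}$. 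The first summand is estimable since $|E(G)|=\mathrm{ind}(K_2,G)$; and working with $\ln|\cdot|$ rather than with a fixed branch of the logarithm is harmless, as noted in the text. So it remains to prove that $G\mapsto|V(G)|^{-1}\ln|\tilde q_G(1)|$ is estimable.

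I would do this by checking that $\tilde q_G$ is a graph polynomial of bounded exponential type and invoking \cite{CF12}. Multiplicativity over disjoint unions, $\tilde q_{G_1\sqcup G_2}=\tilde q_{G_1}\tilde q_{G_2}$, is inherited from $p(G_1\sqcup G_2)(h)=p(G_1)(h)\,p(G_2)(h)$. For the locality of the coefficients one writes $q_m(G)=\sum_{|S|=m}W_G(S)$ with $W_G(S):=k^{-|E(G)|}\sum_{\phi:E\to[k]}\prod_{v\in S}g(\phi(\delta(v)))$, observes that $W_G$ factorises over the connected components of $G[S]$ (distinct components involve disjoint sets of edges), and that $W_G(C)$ for connected $C$ depends only on $G[C]$ together with the degrees $\deg_G(v)$, $v\in C$ --- of which, for $|C|\le m$ and $\Delta$ fixed, there are only finitely many possibilities --- which is exactly the combinatorial structure the definition of exponential type demands. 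Boundedness of the roots comes from Theorem \ref{thm:absence main}: taking the constant $\beta$ of the present theorem slightly smaller than the one in Theorem \ref{thm:absence main} (which is still $>0.35$), the hypothesis $|h(\alpha)-1|\le\beta/(\Delta+1)$ forces $p(G)(\mathbf 1+wg)\neq0$ for every $G\in\G_\Delta$ and every $|w|\le1+\eta$, for some $\eta=\eta(\Delta)>0$; hence every root of $\tilde q_G$ has modulus at most $(1+\eta)^{-1}<1$, uniformly over $\G_\Delta$. It is essential that this zero-free disk has radius \emph{strictly} larger than $1$, and this is precisely what the gap between the two values of $\beta$ is spent on.

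Since $1>(1+\eta)^{-1}$, the theorem of \cite{CF12} quoted in the introduction applies to $\tilde q_G$ at $z=1$ and gives estimability of $|V(G)|^{-1}\ln|\tilde q_G(1)|$; together with the first paragraph this proves the claim. (Equivalently, one may expand $\ln|\tilde q_G(1)|=-\sum_{m\ge1}\tfrac1m\,\mathrm{Re}\sum_i\zeta_i(G)^{-m}$ over the roots $\zeta_i(G)$ of $w\mapsto p(G)(\mathbf 1+wg)$, use \cite{CF12} for the estimability of each normalised power sum $|V(G)|^{-1}\sum_i\zeta_i(G)^{-m}$, and control the tail of the $m$-sum uniformly via $\sum_i|\zeta_i(G)|^{-m}\le|V(G)|(1+\eta)^{-m}$, which makes clear again why a zero-free radius exceeding $1$ is needed.)

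The step I expect to be the main obstacle is the middle one: reconciling $\tilde q_G$ with the precise axioms of a graph polynomial of bounded exponential type in \cite{CF12} --- in particular verifying that the component-multiplicative coefficient expansion through the degree-decorated weights $W_G(C)$ is exactly of the form their framework requires, and that these weights are genuinely local (a bounded-neighbourhood statistic). Once that bookkeeping is in place, the disjoint-union multiplicativity, the edge-density term, and the uniform bound on the roots (from Theorem \ref{thm:absence main}) are all routine.
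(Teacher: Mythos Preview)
Your overall architecture matches the paper's: define $\tilde q_G(x)=k^{-|E(G)|}x^{|V(G)|}p(G)(\mathbf 1+x^{-1}g)$, use Theorem \ref{thm:absence main} (with a slightly smaller $\beta$) to force the roots of $\tilde q_G$ into a disk of radius $<1$ uniformly over $\G_\Delta$, split off the estimable edge-density term, and then appeal to the Csikv\'ari--Frenkel machinery at $z=1$. So far so good, and the root-bounding step and the handling of $|E(G)|/|V(G)|$ are exactly as in the paper.

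The gap is precisely where you suspected it would be: $\tilde q_G$ is \emph{not} a graph polynomial of exponential type in the sense of \cite{CF12}. You yourself observe that $W_G(C)$ for a connected $C\subseteq V(G)$ depends on $G[C]$ \emph{together with} the external degrees $\deg_G(v)$, $v\in C$; but the exponential-type axiom requires the weight to be of the form $\chi(G[C])$ for a graph invariant $\chi$, i.e.\ to depend on $G[C]$ alone. The external-degree data is genuine extra information (two isomorphic induced subgraphs can sit in $G$ with different boundary degrees), so the claim ``which is exactly the combinatorial structure the definition of exponential type demands'' is false as stated, and the bounded-exponential-type theorem from \cite{CF12} does not apply directly. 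Your alternative power-sum argument does not sidestep this: estimability of the normalised power sums in \cite{CF12} is itself derived from the coefficient structure, so you still owe the same locality statement.

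The paper closes this gap by invoking a more general result from \cite{CF12} (their Theorem 4.6), whose hypothesis is only that each coefficient $a_{n-i}(\cdot)$ of $\tilde q_G$ be a \emph{finite linear combination of induced-subgraph counts} $\ind(H,\cdot)$. To verify this, the paper introduces \emph{fragments} $(H,\psi)$ --- graphs decorated with a half-edge count $\psi(v)\in\{0,\dots,\Delta\}$ at each vertex --- and shows that your $W_G$-type weights are really functions of the fragment $G(U)$ rather than of $G[U]$. It then proves, via M\"obius inversion on the partial order $\psi\le\kappa$ on $[\Delta]^{V(H)}$, that each fragment count $\ind^*((H,\psi),G)$ can be rewritten as a linear combination of ordinary induced-subgraph counts $\ind(H',G)$ with $|V(H')|\le(\Delta+1)m$. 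This inversion step is the missing ingredient in your proposal; once it is in place, Theorem 4.6 of \cite{CF12} (applied with $f(z,\omega)=\ln|\omega-z|$) yields the estimability of $|V(G)|^{-1}\ln|\tilde q_G(1)|$ exactly as you intended.
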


\subsection{Organisation of the paper}
In the next section we collect some preliminaries and set up some notation.
In particular, we introduce tensor networks.
In Section \ref{sec:absence} we will prove a slight generalisation of Theorem \ref{thm:absence main} valid for tensor networks.
In Section \ref{sec:algorithms} we will discus\new{s} the technique of Barvinok \cite{B14a} mentioned earlier and prove Theorems \ref{thm:algorithms} and \ref{thm:tutte}. 
In Section \ref{sec:sparse limits} we will give a proof of Theorem \ref{thm:sparse limit}.
Finally, in Section \ref{sec:remarks} we will conclude with a few remarks.
Section\new{s} \ref{sec:algorithms} and \ref{sec:sparse limits} may be read independently of each other.

\section{Preliminaries}\label{sec:prel}
\subsection{Notation and definitions}\label{sec:def}
We will setup some basic notation and give some definitions here that are used throughout the paper.

By $\G$ we denote the collection of all graphs, allowing multiple edges and loops.
Recall that by $\G_\Delta$ we denote the collection of simple graphs of maximum degree at most $\Delta$.
Let $S$ be a ring. An \emph{$S$-valued graph invariant} or \emph{$S$-valued graph parameter} is a map $f:\G\to S$ that maps isomorphic graphs to the same element of $S$. 
We often omit the reference to $S$.
When $S=\C[z]$ (the polynomial ring in the variable $z$) we call $f$ a \emph{graph polynomial}.
A \emph{graph invariant} $f$ is called \emph{multiplicative} is $f(G\cup H)=f(G)f(H)$ where $G\cup H$ denotes the disjoint union of two graphs $G,H$. Clearly, partition functions of edge-coloring models are multiplicative.

Whenever we talk about an edge-coloring model we will always assume it has $k$ colors, where $k$ is some fixed natural number. 
In particular, in algorithmic statements we will consider $k$ as a constant and moreover assume that it is at least $2$, for else computing the partition function is trivial.

\new{
Let $U$ be any $k\times n$ matrix and let $a\in \C^n$.
Let $u_1,\ldots,u_n$ be the columns of $U$.
Define the $k$-color edge-coloring model $h_{a,U}$ by 
\begin{equation}
h_{a,U}(\alpha_1,\ldots,\alpha_k):=\sum_{i=1}^n a_i\prod_{j=1}^k u_i(j)^{\alpha_j} \label{eq:edge vertex}
\end{equation}
for $\alpha\in \N^k$.
In case $a$ is the all ones vector we just write $h_U$ instead of $h_{a,U}$.
Szegedy \cite{S7,S10} showed the following\footnote{While Szegedy only proved this for real matrices $B$, it is easily seen that the proof remains valid for complex matrices, cf. \cite[Lemma 7.1]{R13}.}:
\begin{lemma}\label{lem:edge vertex}
Let $B$ be an $n\times n$ complex symmetric matrix and let $a\in (\C\setminus\{0\})^n$.
Let $U$ be any (complex) matrix such that $U^TU=B$.
Then for any graph $G$ we have $p(G)(a,B)=p(G)(h_{a,U})$.
\end{lemma}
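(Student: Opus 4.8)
The plan is to expand the right-hand side $p(G)(h_{a,U})$ directly from the definition \eqref{eq:def pf} of the edge-coloring-model partition function, and show it collapses to the vertex-coloring-model sum \eqref{eq:def pf vcm} with weights $(a,B)$. First I would write, for a fixed edge-coloring $\phi:E\to[k]$ and a vertex $v$, the multiset $\phi(\delta(v))$ as its incidence vector $\alpha^{(v)}\in\N^k$, so that $\alpha^{(v)}_j$ counts the edges at $v$ colored $j$. Then by \eqref{eq:edge vertex},
\[
h_{a,U}(\alpha^{(v)})=\sum_{i=1}^n a_i\prod_{j=1}^k u_i(j)^{\alpha^{(v)}_j}.
\]
Substituting into \eqref{eq:def pf} gives $p(G)(h_{a,U})=\sum_{\phi:E\to[k]}\prod_{v\in V}\bigl(\sum_{i=1}^n a_i\prod_{j=1}^k u_i(j)^{\alpha^{(v)}_j}\bigr)$.

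The key step is to interchange the sum over $\phi$ with the product over $v$ by introducing an auxiliary vertex-coloring $\psi:V\to[n]$ that records which index $i$ is chosen in the inner sum at each vertex $v$. Expanding the product over $v$ of the inner sums, we get $p(G)(h_{a,U})=\sum_{\phi:E\to[k]}\sum_{\psi:V\to[n]}\prod_{v\in V}a_{\psi(v)}\prod_{j=1}^k u_{\psi(v)}(j)^{\alpha^{(v)}_j}$. Now I would swap the order of summation and, for fixed $\psi$, carry out the sum over $\phi$. The crucial observation is that $\prod_{v\in V}\prod_{j=1}^k u_{\psi(v)}(j)^{\alpha^{(v)}_j}$ factors over edges: each edge $e=uv$ colored $j$ contributes the factor $u_{\psi(u)}(j)\,u_{\psi(v)}(j)$ (once for each endpoint), and these are the only places $\phi(e)$ appears. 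Hence $\sum_{\phi}\prod_{v}\prod_j u_{\psi(v)}(j)^{\alpha^{(v)}_j}=\prod_{e=uv\in E}\sum_{j=1}^k u_{\psi(u)}(j)\,u_{\psi(v)}(j)=\prod_{e=uv\in E}(U^TU)_{\psi(u),\psi(v)}=\prod_{uv\in E}B_{\psi(u),\psi(v)}$, using the hypothesis $U^TU=B$. Therefore $p(G)(h_{a,U})=\sum_{\psi:V\to[n]}\prod_{v\in V}a_{\psi(v)}\prod_{uv\in E}B_{\psi(u),\psi(v)}=p(G)(a,B)$, which is exactly \eqref{eq:def pf vcm}.

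The main obstacle is purely bookkeeping: making the edge-factorization step rigorous when $G$ has loops and multiple edges, since then a single edge may contribute to the incidence vector of one vertex twice (a loop at $v$ contributes $2$ to $\alpha^{(v)}_j$) and one must check that the factor $B_{\psi(v),\psi(v)}=\sum_j u_{\psi(v)}(j)^2$ comes out correctly. I would handle this by being careful that $\delta(v)$ is a multiset (as emphasized after \eqref{eq:def pf}), so a loop is counted with multiplicity two in $\delta(v)$ and contributes $u_{\psi(v)}(j)^2$ to the product — matching $(U^TU)_{\psi(v),\psi(v)}$ — and for parallel edges the product over $E$ genuinely runs over the edge multiset. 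No deep estimate is needed; the content is the Fubini-type exchange of the two summations and recognizing $U^TU=B$ in the per-edge factor.
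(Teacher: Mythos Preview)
Your argument is correct and is exactly the standard proof: expand the product over vertices into a sum over vertex-colorings $\psi:V\to[n]$, then factor the remaining $\phi$-sum over edges and recognise each edge factor as $(U^TU)_{\psi(u),\psi(v)}=B_{\psi(u),\psi(v)}$. Your treatment of loops and multiple edges via the multiset convention for $\delta(v)$ is also correct.

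As for comparison with the paper: the paper does not actually give a proof of this lemma. It attributes the result to Szegedy \cite{S7,S10} and notes in a footnote that Szegedy's proof (stated for real $B$) carries over verbatim to complex $B$, referring to \cite[Lemma~7.1]{R13}. The argument you wrote is precisely Szegedy's proof, so there is nothing to contrast; you have supplied what the paper only cites.
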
}

\subsection{Tensor networks}\label{sec:tensor}
As mentioned in the introduction, Theorem \ref{thm:absence main} not only applies to edge-coloring models but also to tensor networks. We will introduce tensor networks here.

First we need some further definitions.
For $\alpha\in \N^k$ set $|\alpha|:=\sum_{i=1}^k \alpha_i$.
By $\N^k_d$ we denote the set $\{\alpha\in \N^k:|\alpha|=d\}$.
We identify the set of functions $\{h:\N^k_d\to \C\}$ with $\C^{\N^k_d}$ \new{and denote it by $S_d$}. 
Let $G=(V,E)$ be graph. We define 
\[
S_G=\prod_{v\in V}\new{S_{\deg(v)}^v}.
\]
Let $h=(h^v)_{v\in V}\in S_G$. We call the pair $(G,h)$ a \emph{tensor network}.
The \emph{contraction} of the tensor network $(G,h)$ is defined as follows:
\begin{equation}
p(G)(h):=\sum_{\phi:E\to[k]}\prod_{v\in V}h^v(\phi(\delta(v))).\label{eq:tensor network}
\end{equation}
This is almost the same as the partition function of an edge-coloring model. 
The main difference is that in the tensor network we can select a function for each vertex, while for an edge-coloring model the function is only allowed to depend on the degree of the vertex.
It will be convenient to view an edge-coloring model $h$ as an infinite vector $(h^d)_{d\in \N}\in \prod_{d\in \N} \C^{\N^k_d}$. 

The reason why $(G,h)$ is called a tensor network comes from the fact that we may view $h^v$ as a symmetric tensor in $(\C^k)^{\otimes \deg(v)}$ for each $v\in V$. \new{The contraction of the tensor network then corresponds to contracting a tensor obtained from these $h^v$'s along the edges of $G$ with respect to a bilinear form.
We refer to Section 3.4 of the author's PhD thesis \cite{R13} for more details on this perspective.}

Contractions of tensor networks have been used to simulate quantum computing by Markov and Shi in \cite{MS08}, where they gave a polynomial time algorithm for contracting tensor networks on bounded degree graphs of bounded treewidth. 
In \cite{AL10} Arad and Landau use quantum algorithms to compute additive approximations to tensor network contractions. We refer to both \cite{MS08} and \cite{AL10} and the references in there for more details concerning the connection of tensor networks and quantum information theory.
Contractions of tensor networks can also be used to model Holant counting problems (see \cite{CLX09,CHL10,CLX11}).

\subsection{The orthogonal group}
There is a natural action of the complex orthogonal group, $O_k(\C)$ (the group of $k\times k$ matrices $g$ such that $g^Tg=I$ where $I$ is the identity matrix) on tensor networks (and hence on edge-coloring models) that leaves the tensor network contraction invariant.
\new{
To define this action in a convenient way, we need to realize that $\{h:\N^k\to \C\}$ is in one-to-one correspondence with the space of linear map from $\C[x_1,\ldots,x_k]$ to $\C$. 
Each such map is determined by its action on monomials and the monomials are in one-to-one correspondence with $\N^k$. 
This we may view any $h\in \C^{\N^k_{d}}$ as a linear map $h:\C[x_1,\ldots,x_k]\to \C$ that is zero on all monomials of total degree bigger than $d$.
Now $O_k(\C)$ acts on $\C[x_1,\ldots,x_k]$  as follows: an element $g\in O_k(\C)$ sends a polynomial $p$ to the unique polynomial $gp$ that on input $v\in \C^k$ takes the value $p(g^Tv)$.
Then $O_k(\C)$ also naturally acts on the space of linear maps $\C[x_1,\ldots,x_k]\to \C$ via $gf(p):=f(g^Tp)$ for $g\in O_k(\C)$  $p\in \C[x_1,\ldots,x_k]$, $v\in \C^k$ and any linear map $f:\C[x_1,\ldots,x_k]\to \C$.}

\new{
For a tensor network $(G,h)$ with $h=(h^v)_{v\in V}\in S_G$ and $g\in O_k(\C)$ we define $(G,gh)$ to be the tensor network with $gh:=(gh^v)_{v\in V}\in S_G$.
Then}
\begin{equation}
\text{for any $g\in O_k(\C)$, graph $G$ and $h\in S_G$: $p(G)(gh)=p(G)(h)$.} \label{eq:orthogonal invariance}
\end{equation}
\new{For edge-coloring models this is proved in \cite{DGLRS} and the same proof also works for contractions of tensor networks.}

\section{Absence of roots for tensor network contractions}\label{sec:absence}
In this section we will find zero-free regions for tensor network contractions. In particular, Theorem \ref{thm:absence main} will be proved here.
We begin with some notation, necessary to state our main theorem. 
Define for $d\in \N$, $\delta\in (0,1)$ and $\eta>0$, the set
\[
S_d(\delta,\eta):=\{h\in \C^{\N^k_d}\mid |h_\alpha-h_{\alpha'}|<\delta\ \text{ and } |h_\alpha|\geq \eta, \text{ for all }\alpha,\alpha' \in \N^k_d \},
\]
and for a graph $G=(V,E)$, the set
\[
S_G(\delta,\eta):=\prod_{v\in V} S^v_{\deg(v)}(\delta,\eta)\subset S_G.
\]
We can now state our main theorem.

\begin{theorem}\label{thm:absence of roots}
Let $G=(V,E)$ be a graph and let $\eta\in (0,\infty)$.
For any $\theta\in(0,2\pi/3)$ fix $\beta\leq \eta\theta\cos(\theta/2)$ and let $\delta:=\min\{\eta,\frac{\beta}{\Delta(G)+1}\}$.
Then for each $h\in S_G(\delta,\eta)$, $|p(G)(h)|\geq (\cos(\theta/2)\eta)^{|V|}k^{|E|}$. 
In particular, $p(G)(h)\neq 0$.
\end{theorem}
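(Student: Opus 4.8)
The plan is to argue by induction on the number of edges of $G$ (an induction on $|V|$, deleting one vertex together with all its incident edges, works equally well). If $G$ has an edge $e=uv$, then summing first over the colour of $e$ gives the contraction identity
\[
p(G)(h)=\sum_{j=1}^{k}p(G\setminus e)\big(h^{(j)}\big),
\]
where $G\setminus e$ is $G$ with $e$ removed and $h^{(j)}$ is the tensor network on $G\setminus e$ that agrees with $h$ away from $u,v$ and whose tensor at $u$ is $\alpha\mapsto h^u(\alpha+\mathbf{1}_j)$ for $\alpha\in\N^k_{\deg_{G\setminus e}(u)}$ (and likewise at $v$; if $e$ is a loop one substitutes $2\mathbf{1}_j$ at $u=v$). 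The first, essentially formal, step is a \emph{preservation lemma}: every entry of $h^{(j),u}$ equals an entry $h^u(\beta)$ of the original tensor with $|\beta|=\deg_G(u)$, so the two defining inequalities of $S_G(\delta,\eta)$ — entries within $\delta$ of one another, and all of modulus at least $\eta$ — pass verbatim to $h^{(j)}$, and $\Delta(G\setminus e)\le\Delta(G)$. Hence $h^{(j)}\in S_{G\setminus e}(\delta,\eta)$ with exactly the same $\delta,\eta$, so the inductive hypothesis applies to each of the $k$ summands. It is essential that $\delta$ is not allowed to degrade along the recursion, which is why the hypothesis fixes $\delta=\min\{\eta,\beta/(\Delta+1)\}$.

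The substance of the proof is to propagate a \emph{region} for $p(G)(h)$ through this recursion. One attaches to each vertex $v$ a reference scalar $c_v$ — conveniently the (multinomially weighted) average of the entries of $h^v$ over all colourings of $\delta(v)$ — and maintains that, after dividing by $k^{|E|}\prod_v c_v$, the value $p(G)(h)$ lies in the angular sector of half-opening $\theta/2$ about the positive reals, together with the quantitative bound $|p(G)(h)|\ge(\cos(\theta/2)\,\eta)^{|V|}k^{|E|}$. Two elementary facts drive the step. First, a complex-number lemma: if $z_1,\dots,z_N$ lie in a sector of half-opening $\theta/2<\pi/2$ about a common ray, then $\sum_i z_i$ lies in the same sector and $|\sum_i z_i|\ge\cos(\theta/2)\sum_i|z_i|$; applied to the $k$ summands $p(G\setminus e)(h^{(j)})$, this keeps the value in the sector and feeds the inductive lower bounds forward. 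Second, the trigonometric inequality $\theta\le 2\tan(\theta/2)$, valid on $(0,\pi)\supseteq(0,2\pi/3)$: it converts the hypothesis $\delta\le\eta\,\theta\cos(\theta/2)/(\Delta+1)$ into $\delta/\eta\le 2\sin(\theta/2)$, which is precisely what makes $|c_v|\ge\cos(\theta/2)\eta$ for the averaged reference and keeps every tensor entry within angle less than $\theta/2$ of $c_v$. From this the base case (no edges: $p(G)(h)=\prod_v h^v$, of modulus $\ge\eta^{|V|}$) follows with room to spare, and the same bounds also underlie the statement about $\deg(v)=0$ vertices, loops, and multiple edges, which need only notational care.

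The hard part — and where the argument is genuinely delicate — is that this invariant, propagated carelessly, \emph{widens}: at each step the $k$ summands point in slightly different directions, the sector opens by a term of order $\arcsin(\delta/\eta)$, and the modulus estimate bleeds a factor strictly below $\cos(\theta/2)$, so over the $\sim|E|$ steps the bound would collapse (already for graphs with $|E|>|V|$). The way out, exactly as in Barvinok's method, is to absorb these first-order perturbations before they accumulate: choosing the $c_v$ as averages makes the first-order contribution of the deviations $h^v(\cdot)-c_v$ to the recursion vanish, so the effective error is of second order in $\delta/\eta$; organising those second-order terms — equivalently, passing to $\log p(G)(h)$ and checking that its cluster/Mayer-type expansion converges — gives a bound uniform in $|V|$, the convergence condition being exactly $\delta\le\beta/(\Delta+1)$ with $\beta\le\eta\,\theta\cos(\theta/2)$, where the factor $\Delta+1$ accounts for the at most $\deg(v)+1\le\Delta+1$ perturbations attached to each vertex. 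I expect this uniform-in-$|V|$ bookkeeping of the cancellation to be the one point requiring real work; the rest is unwinding the recursion together with the two elementary inequalities above. Once the tensor-network version is in hand, Theorem~\ref{thm:absence main} is the special case in which each $h^v$ depends only on $\deg(v)$, and a concrete admissible constant $\beta>0.35$ comes out of optimising $\theta$ in $\beta\le\theta\cos(\theta/2)$ (taking $\eta=1$).
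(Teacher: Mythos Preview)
Your edge-deletion recursion and preservation lemma are correct, and you have correctly diagnosed the fatal defect of the naive scheme: the sector widens by roughly $\arcsin(\delta/\eta)$ at each of the $|E|$ steps, so the invariant collapses. But your proposed repair --- averaging out first-order terms via reference scalars $c_v$ and then invoking a cluster/Mayer expansion to control the second-order remainder --- is not carried out; ``I expect this uniform-in-$|V|$ bookkeeping to be the one point requiring real work'' is exactly where the proof is missing. A cluster-expansion argument could in principle produce \emph{some} zero-free disk, but nothing you have written indicates how it yields the specific radius $\beta\le\eta\theta\cos(\theta/2)$ or the lower bound $(\cos(\theta/2)\eta)^{|V|}k^{|E|}$, and the trigonometric manoeuvre $\theta\le 2\tan(\theta/2)$ you highlight does not obviously enter such an argument.

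The paper avoids the accumulation problem by a different device, with no averaging or cluster expansion. Rather than deleting edges, it works with partial colourings: for $F\subseteq E$ and $\phi\colon F\to[k]$, set $p^F_\phi(G)(h)$ to be the restricted sum. Three statements are proved by downward induction on $|F|$: (i) $p^F_\phi(G)(h)\ne 0$ on all of $S_G(\delta,\eta)$; (ii) for any $u$ and any two extensions $\psi,\psi'$ of $\phi$ to $F'=F\cup\delta(u)$, the angle between $p^{F'}_\psi(G)(h)$ and $p^{F'}_{\psi'}(G)(h)$ is at most $\theta$; (iii) a pointwise bound on the logarithmic derivatives $\partial_{x^v_\alpha}\ln p^F_\phi(G)$ for $v\in N(u)\cup\{u\}$. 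The crucial trick, which replaces your missing cancellation argument, is that comparing $p^{F'}_\psi(G)(h)$ with $p^{F'}_{\psi'}(G)(h)$ is recast as comparing the \emph{same} polynomial $p^{F'}_\psi(G)$ at two points of $S_G(\delta,\eta)$: one permutes the entries of $h^v$ at the $\le\Delta+1$ vertices $v\in N(u)\cup\{u\}$ to produce $g\in S_G(\delta,\eta)$ with $p^{F'}_\psi(G)(g)=p^{F'}_{\psi'}(G)(h)$. Since $p^{F'}_\psi(G)$ is nonzero on the whole convex region (by induction), $\ln p^{F'}_\psi(G)$ has a branch there, and the angle is bounded by integrating its gradient along the segment from $h$ to $g$: the derivative bound (iii) gives total gradient $\le 1/(\eta\cos(\theta/2))$ per vertex, the displacement is $\le\delta$ per coordinate, over $\le\Delta+1$ vertices, so the angle is $\le\delta(\Delta+1)/(\eta\cos(\theta/2))\le\theta$. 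This bound is \emph{uniform in $F$} --- no widening occurs --- precisely because the comparison takes place inside a region already certified zero-free by the inductive hypothesis. That is the idea your sketch is lacking.
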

We will prove Theorem \ref{thm:absence of roots} in the next section, but we will first state some consequence.

Obviously, the goal is to choose $\theta$ with $\beta$ as large as possible.
For this we need to solve $\max_{\theta\in (0,2\pi/3)}\theta\cos(\theta/2)$. 
This maximum is attained by the solution $\theta^*$ to the equation $2/\theta=\tan(\theta/2)$, which is approximately equal to $\theta^*\approx 1.72067$. Let $x^*=\theta^*\cos(\theta^*/2)$. Then $x^*\approx 1.12219$.
Define \new{for $d\in \N$ 
\[\beta^*(d):=\frac{x^*}{1+\frac{x^*}{2d}}.\] 
Then $ 0.71885\approx\beta^*(1)<\beta^*(2)<\beta^*(3)<\cdots \leq x^*\approx 1.12219$.}
We have the following corollary, which implies Theorem \ref{thm:absence main}. 

\begin{cora}\label{cor:absence of roots}
Let $G=(V,E)$ be a graph. Fix nonzero $a_v\in \C$ for each $v\in V$. \new{Write $\beta^*:=\beta^*(\Delta(G)+1)$.}
Let $h\in S_G$ such that $|a_v-h^v(\alpha)|\leq \frac{|a_v|\beta^*}{2(\Delta(G)+1)}$ for all $v\in V$ and $\alpha\in \N_{\deg(v)}^k$. Then 
\begin{equation}|p(G)(h)|\geq \prod_{v\in V}|a_v|\cdot \big(\cos(\theta^*/2)(1-\frac{\beta^*}{2\Delta(G)+2})\big)^{|V|}k^{|E|}.\label{eq:bound pf}
\end{equation}
In particular, $p(G)(h)\neq 0$.
\end{cora}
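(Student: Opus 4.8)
The plan is to deduce Corollary~\ref{cor:absence of roots} from Theorem~\ref{thm:absence of roots} by a multiplicative rescaling that absorbs the vertex weights $a_v$. The contraction $p(G)(\cdot)$ is multilinear in the vertex tensors (each $h^v$ occurs exactly once in every term of \eqref{eq:tensor network}), so setting $\tilde h^v(\alpha):=h^v(\alpha)/a_v$ gives $p(G)(h)=\big(\prod_{v\in V}a_v\big)\,p(G)(\tilde h)$, and it suffices to bound $|p(G)(\tilde h)|$ from below. Dividing the hypothesis by $|a_v|$ turns it into $|\tilde h^v(\alpha)-1|\le \tfrac{\beta^*}{2(\Delta(G)+1)}$ for all $v$ and all $\alpha\in\N^k_{\deg(v)}$, whence $|\tilde h^v(\alpha)|\ge \eta$ and $|\tilde h^v(\alpha)-\tilde h^v(\alpha')|\le \tfrac{\beta^*}{\Delta(G)+1}$, where $\eta:=1-\tfrac{\beta^*}{2(\Delta(G)+1)}\in(0,1)$.

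I would then apply Theorem~\ref{thm:absence of roots} to $\tilde h$ with this $\eta$, with $\theta:=\theta^*$ (so $\theta\cos(\theta/2)=x^*$) and $\beta:=\eta x^*=\eta\,\theta^*\cos(\theta^*/2)$. Writing $d:=\Delta(G)+1$, the reason the constant $\beta^*(d)=x^*/(1+x^*/(2d))=\tfrac{2dx^*}{2d+x^*}$ is defined as it is, is precisely that it makes the identity
\[
\eta\,x^*=\Big(1-\tfrac{\beta^*}{2d}\Big)x^*=\frac{2d}{2d+x^*}\,x^*=\frac{2dx^*}{2d+x^*}=\beta^*
\]
hold. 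Hence $\beta=\beta^*$ and $\delta:=\min\{\eta,\beta/d\}=\beta^*/d$ (for $d\ge 2$, the inequality $\beta^*/d<\eta$ is a routine consequence of $\beta^*\le x^*<4/3$; for $d=1$ every vertex has degree $0$ and the difference conditions defining $S_G(\delta,\eta)$ are vacuous). Thus $\tilde h$ sits in $S_G(\delta,\eta)$, apart from the boundary case in which some difference equals $\beta^*/d$ exactly, where the strict inequality in the definition of $S_d(\delta,\eta)$ just fails. This I would circumvent by continuity: apply the argument to $h_t^v(\alpha):=a_v+t(h^v(\alpha)-a_v)$, $t\in[0,1)$; the hypothesis holds for $h_t$ with $\eta$ improved to $\eta_t:=1-t\tfrac{\beta^*}{2d}$, one checks $\eta_t-t\eta=1-t>0$ so the rescaled differences (which are $\le t\beta^*/d=t\eta x^*/d$) are strictly below $\delta_t:=\eta_t x^*/d$, and Theorem~\ref{thm:absence of roots} applies verbatim to give $|p(G)(\tilde h_t)|\ge(\cos(\theta^*/2)\eta_t)^{|V|}k^{|E|}\ge(\cos(\theta^*/2)\eta)^{|V|}k^{|E|}$. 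Letting $t\uparrow 1$ and using that $p(G)(\cdot)$ is a polynomial in, hence continuous in, the tensor entries yields the same bound for $\tilde h$.

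Combining the two steps gives
\[
|p(G)(h)|=\prod_{v\in V}|a_v|\cdot|p(G)(\tilde h)|\ \ge\ \prod_{v\in V}|a_v|\cdot\big(\cos(\theta^*/2)\,\eta\big)^{|V|}k^{|E|},
\]
which is exactly \eqref{eq:bound pf} since $\eta=1-\tfrac{\beta^*}{2\Delta(G)+2}$; moreover all three factors on the right are strictly positive ($a_v\ne 0$; $0<\theta^*/2<\pi/2$ so $\cos(\theta^*/2)>0$; $\eta>0$), so $p(G)(h)\ne 0$. I expect the only points requiring genuine care to be the verification of the identity $\eta x^*=\beta^*$ (which is what singles out $\beta^*(d)$ as the optimal constant here) together with the mild bookkeeping that routes the non-strict hypothesis of the corollary through the strict hypothesis of Theorem~\ref{thm:absence of roots}; the rescaling and the final assembly are routine.
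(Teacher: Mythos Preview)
Your argument is correct and follows essentially the same route as the paper: rescale by $a_v$ to reduce to the case of tensors close to the all-ones tensor, verify the identity $\eta x^*=\beta^*$ that matches the parameters to Theorem~\ref{thm:absence of roots}, and read off the bound. In fact you are slightly more careful than the paper's own proof: you explicitly handle the discrepancy between the non-strict hypothesis of the corollary and the strict inequality in the definition of $S_d(\delta,\eta)$ via a limiting argument, and you note the degenerate $d=1$ case, both of which the paper glosses over.
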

\begin{proof}
Let us first consider the case where $a_v=1$ for all $v$. Let $\eta:=1-\frac{\beta^*}{2\Delta(G)+2}$.
A simple computation shows that $\new{\beta^*=\eta x^*}$.
Clearly, $\eta>\beta^*/2$. So if we let $\delta=\frac{\beta^*}{\Delta(G)+1}$, then each $h^v\in S(\delta,\eta)$ and hence \eqref{eq:bound pf}, with $a_v=1$ for each $v\in V$, follows from Theorem \ref{thm:absence of roots}.

In the general case let $g^v(\alpha)=h^v(\alpha)/a_v$ for each $\alpha $ and $v\in V$. 
Then for each $v$ and $\alpha$, $|1-g^v(\alpha)|\leq \frac{\beta^*}{2(\Delta(G)+1)}$.
So \eqref{eq:bound pf}, with $a_v=1$ for each $v\in V$, holds for $p(G)(g)$.
Since $p(G)(h)=\prod_{v\in V}{a_v}\cdot p(G)(g)$, \eqref{eq:bound pf} now follows.
\end{proof}
We obtain another corollary if we take the action of the orthogonal group into account.
\new{Let $x\in \C^k$ and consider $x$ as a $k\times 1$ matrix. 
Let $h_x$ denote the edge-coloring model as defined in \eqref{eq:edge vertex} (so $n=1$, $u_1=x$ and $a_1=1$ in this case).}
\begin{corb}\label{cor:invarance}
Let $G=(V,E)$ be a graph. Then for each $x\in \C^k$ such that $(x,x)=\sum_{i=1}^kx_i^2\neq 0$ and nonzero $a_v\in \C$ for each $v\in V$, there exists a constant $\delta>0$ (only depending on $x$ \new{and $\Delta(G)$}) such that if $h\in S_G$ satisfies 
\[
|h^v(\alpha)-\new{a_vh_x}(\alpha)|\leq \frac{|a_v||(x,x)/k|^{\deg(v)/2}\delta}{2(\Delta(G)+1)}
\] 
for all $v\in V$ and $\alpha\in \N^k_{\deg(v)}$, then $p(G)(h)\neq 0$.
\end{corb}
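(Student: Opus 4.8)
The plan is to derive Corollary 6b from Corollary 6a using the orthogonal invariance \eqref{eq:orthogonal invariance}. The key observation is that, viewed as a linear functional on $\C[x_1,\ldots,x_k]$, the model $h_x$ is just the evaluation map $p\mapsto p(x)$, and that for the $O_k(\C)$-action of Section~\ref{sec:tensor} one has $g\cdot h_x=h_{gx}$ for every $g\in O_k(\C)$ (indeed $(g\cdot h_x)(p)=h_x(g^Tp)=(g^Tp)(x)=p(gx)$), an action which moreover respects the grading by total degree (so it acts on each $S_d$ separately). Hence applying a suitable $g$ replaces $x$ by any vector in its $O_k(\C)$-orbit, i.e.\ by any $y\in\C^k$ with $(y,y)=(x,x)$.

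The first step is to rotate $x$ onto the all-ones direction. Set $t:=\sqrt{(x,x)/k}$; since $(x,x)\neq 0$ we have $t\neq 0$, and $(t\mathbf{1},t\mathbf{1})=kt^2=(x,x)$ with $\mathbf{1}:=(1,\ldots,1)^T$. As $\mathrm{span}(x)$ and $\mathrm{span}(\mathbf{1})$ are non-degenerate one-dimensional subspaces for the form $(\cdot,\cdot)$, Witt's extension theorem provides $g\in O_k(\C)$ with $gx=t\mathbf{1}$. The point of this choice is that $h_{gx}=h_{t\mathbf{1}}$ is constant on each layer $\N^k_d$: for $\alpha\in\N^k_{\deg(v)}$ one has $h_{t\mathbf{1}}(\alpha)=\prod_j t^{\alpha_j}=t^{\deg(v)}$.

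The second step is to push the closeness hypothesis through $g$. The map $g$ acts invertibly on each finite-dimensional space $S_d$ with $d\le\Delta(G)$; let $N=N(x,\Delta(G))$ be the largest of the operator norms of these actions with respect to the supremum norm. Writing $h_x^v$ for the restriction of $h_x$ to $\N^k_{\deg(v)}$ and using $g\cdot h_x^v=h_{t\mathbf{1}}^v\equiv t^{\deg(v)}$, we get, for each $v\in V$ and $\alpha\in\N^k_{\deg(v)}$,
\[
\bigl|(gh)^v(\alpha)-a_vt^{\deg(v)}\bigr|=\bigl|\bigl(g\cdot(h^v-a_vh_x^v)\bigr)(\alpha)\bigr|\le N\sup_{\alpha'\in\N^k_{\deg(v)}}\bigl|h^v(\alpha')-a_vh_x(\alpha')\bigr|\le N\cdot\frac{|a_v|\,|(x,x)/k|^{\deg(v)/2}\,\delta}{2(\Delta(G)+1)}.
\]
Since $|a_vt^{\deg(v)}|=|a_v|\,|(x,x)/k|^{\deg(v)/2}$ and $a_vt^{\deg(v)}\neq 0$, choosing $\delta:=\beta^*/N$ with $\beta^*:=\beta^*(\Delta(G)+1)$ bounds the right-hand side by $\tfrac{|a_vt^{\deg(v)}|\,\beta^*}{2(\Delta(G)+1)}$. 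Thus the tensor network $(G,gh)$, with the nonzero constants $a_vt^{\deg(v)}$ playing the role of the $a_v$'s, satisfies the hypotheses of Corollary 6a, so $p(G)(gh)\neq 0$, whence $p(G)(h)=p(G)(gh)\neq 0$ by \eqref{eq:orthogonal invariance}. As $g$ (and hence $N$) depends only on $x$ and the fixed $k$, and $\beta^*$ only on $\Delta(G)$, the constant $\delta$ depends only on $x$ and $\Delta(G)$, as claimed.

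The conceptual content is merely ``rotate $x$ to the all-ones vector and quote Corollary 6a''. I expect the only points needing care to be (i) unwinding the $O_k(\C)$-action of Section~\ref{sec:tensor} to confirm $g\cdot h_x=h_{gx}$ and that it preserves each graded piece $S_d$, and (ii) the bookkeeping with the norm-distortion constant $N$, which is finite precisely because we work with finitely many finite-dimensional spaces $S_d$ ($d\le\Delta(G)$) and which is exactly what lets a single $\delta$ serve all vertices. Neither of these presents a serious obstacle.
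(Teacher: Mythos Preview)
Your proposal is correct and follows the same strategy as the paper: rotate $x$ onto the all-ones direction via some $g\in O_k(\C)$, observe that $gh_x$ becomes the constant $\lambda^{\deg(v)}$ on each layer, and then apply Corollary~6a together with the orthogonal invariance \eqref{eq:orthogonal invariance}. The paper's proof handles the quantitative step by a bare appeal to continuity, whereas you make it explicit via the operator-norm constant $N$; this is the same argument with more bookkeeping filled in.
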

\begin{proof}
Since $(x,x)\neq 0$ there exist $g\in O_k(\C)$ such that $gx=\lambda\mathbb{1}$, a scalar multiple of the all-ones vector, with $\lambda=((x,x)/k)^{1/2}$.
\new{Then for any $\alpha\in \N^k_d$ we have that  $gh_x(\alpha)=\lambda^{d}$.
By continuity, if $h^v$ is close to $a_vh_x$ it follows that $gh^v(\alpha)$ is close to $a_v\lambda^{d}$ for each $\alpha\in \N^k_d$.}
The result now follows from Corollary \ref{thm:absence of roots}a and the fact that tensor network contraction is invariant under the action of the orthogonal group.
\end{proof}
\new{Corollary \ref{thm:absence of roots}b is roughly saying that if the tensors $h^v$ are close enough to the rank one tensor $h_x$, then the tensor network contraction is not equal to zero.}

We will now give an example illustrating the power and weakness of our results.
\begin{ex}\label{ex:ind}
Consider for $\lambda,\mu\in \C$ the matrix $B=\left( \begin{array}{cc} 0&\lambda\mu\\ \lambda\mu&\mu^2\end{array}\right)$ and let $a=(1,1)$.
Since the matrix $B$ contains a zero, the result of Barvinok and Sober\'on \cite{BS14a} does not apply to it.
However, if we let $U=1/\sqrt{2}\left(\begin{array}{cc}(1+i)\lambda&\mu\\ (1-i)\lambda &\mu\end{array}\right)$, then $U^TU=B$. 
\new{Consider the edge-coloring model $h_U$ as defined in \eqref{eq:edge vertex}. 
Then, by Lemma \ref{lem:edge vertex}}, we have $p(G)(a,B)=p(G)(h_U)$ for each graph $G$.
By Corollary \ref{thm:absence of roots}a, for fixed nonzero $\mu$, fixed $\Delta>0$ and small enough $\lambda$ we have that $p(G)(h_U)\neq 0$ for all graphs with maximum degree at most $\Delta$.

For $\mu=1$ we can view $p(h_U)(G)$ as a polynomial in $\lambda$, which for $\Delta$-regular graphs coincides with the \emph{independence polynomial} of $G$ evaluated at $\lambda^\Delta$.
Scott and Sokal \cite{SS05} proved that, if $|\lambda^\Delta|\leq \frac{(\Delta-1)^{\Delta-1}}{\Delta^{\Delta}}$, then $p(h_U)(G)\neq 0$ on any $\Delta$-regular graph $G$. They moreover \new{showed} that this bound is tight.
As $\Delta \to \infty$ this bound behaves roughly like $\frac{1}{e\Delta}$, while the result we obtain is of the order $O(\frac{1}{\Delta^\Delta})$, showing that our results are not optimal.
\end{ex}

\subsection{A proof of Theorem \ref{thm:absence of roots}}
Our proof of Theorem \ref{thm:absence of roots} is based on a method of Barvinok \cite{B14a,B14b,B15} and Barvinok and Sober\'on \cite{BS14a,BS14b}. 
Following Barvinok \cite{B15} we can say that although the method of proof is similar to the methods considered in \cite{B14a,B14b,BS14a,BS14b,B15}  it does require some effort and new ideas. 

We will first state and prove several lemmas.
The first lemma about angles between vectors is due to Boris Bukh see \cite{B14b}. 
We refer to \cite{B14b} for a proof.
\begin{lemma}\label{lem:angle bound 1}
Let $\vartheta\in [0,2\pi/3)$. Let $x_1,\ldots,x_n$ be nonzero vectors in $\R^2$ such that for each pair $i,j$ the angle between $x_i$ and $x_j$ does not exceed $\vartheta$.
Then $\|\sum_{i=1}^nx_i\|\geq\cos(\vartheta/2)\sum_{i=1}^n \|x_i\|$.
\end{lemma}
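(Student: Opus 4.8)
The plan is to reduce the statement to a one–dimensional estimate by first showing that, under the hypothesis, all the $x_i$ point into a common narrow cone. For each $i$ record the direction $\phi_i\in\R/2\pi\Z$ of the nonzero vector $x_i$; the hypothesis says that any two of these directions are at angular distance at most $\vartheta$ along the circle. I claim this forces all $\phi_i$ to lie in a single arc of length at most $\vartheta$. To see this I would rotate so that $\phi_1=0$; then, since $\vartheta<2\pi/3<\pi$, "angular distance at most $\vartheta$ from $0$" means exactly that each $\phi_i$ has a representative in the closed interval $[-\vartheta,\vartheta]$, an arc of total length $2\vartheta<2\pi$. Take $a=\min_i\phi_i$ and $b=\max_i\phi_i$ among these representatives, so $b-a\le 2\vartheta$. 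The angular distance between the directions $a$ and $b$ equals $\min\{b-a,\,2\pi-(b-a)\}$ and must be $\le\vartheta$. If $b-a>\vartheta$, then necessarily $2\pi-(b-a)\le\vartheta$, i.e.\ $b-a\ge 2\pi-\vartheta$; but $2\pi-\vartheta>2\vartheta$ precisely because $\vartheta<2\pi/3$, contradicting $b-a\le 2\vartheta$. Hence $b-a\le\vartheta$ and all directions lie in the arc $[a,b]$. This step is the one place where the hypothesis $\vartheta<2\pi/3$ enters essentially, and I expect it to be the main (if not deep) obstacle, since the bound is exactly tight at $\vartheta=2\pi/3$, witnessed by three vectors at mutual angle $2\pi/3$ summing to zero.

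Second, after a further rotation I may assume the common arc is $[-w/2,w/2]$ with $w:=b-a\le\vartheta$, so each $x_i$ makes an angle $\psi_i$ with the first standard basis vector $e$ satisfying $|\psi_i|\le w/2\le\vartheta/2<\pi/3$. Writing $S:=\sum_{i=1}^n x_i$, one then has
\[
\la S,e\ra=\sum_{i=1}^n\la x_i,e\ra=\sum_{i=1}^n\|x_i\|\cos\psi_i\ \ge\ \cos(\vartheta/2)\sum_{i=1}^n\|x_i\|,
\]
where the inequality uses that $\cos$ is nonnegative and even and decreasing on $[0,\pi/2)$ together with $|\psi_i|\le\vartheta/2$.

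Finally, by the Cauchy--Schwarz inequality $\|S\|=\|S\|\,\|e\|\ge\la S,e\ra$, and combining this with the previous display yields $\|S\|\ge\cos(\vartheta/2)\sum_{i=1}^n\|x_i\|$, which is the assertion. The only role of the nonvanishing of the $x_i$ is to make the directions $\phi_i$ well defined, and the case $n=0$ is trivial, so no separate treatment of degenerate cases is needed.
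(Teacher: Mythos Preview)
Your argument is correct. The paper does not actually prove this lemma; it attributes it to Bukh and refers the reader to \cite{B14b} for a proof. Your self-contained argument---reducing to a common arc of width at most $\vartheta$ (which is where the bound $\vartheta<2\pi/3$ is used in an essential and sharp way), centering that arc, and then projecting onto the bisecting direction---is the natural one and matches the standard proof.
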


We will consider elements of $\C$ as vectors in $\R^2$ so that we can speak about the angle between two complex numbers. This is how Lemma \ref{lem:angle bound 1} will be applied.

It will be convenient to introduce some notation.
Let $F,F'$ be finite sets with $F\subseteq F'$. Let $\phi:F\to [k]$ and let $\psi:F'\to [k]$. 
We say that $\psi$ \emph{extends} $\phi$ if $\psi|_F=\phi$.
When $G=(V,E)$ is a graph, $u\in V$, $F\subseteq E$ and $\alpha\in \N^k_{\deg(u)}$,  we say that $\alpha$ is \emph{$u$-compatible} with $\phi$ if there exists a map $\phi':F\cup \delta(u)\to [k]$ that extends $\phi$ such that $\phi'(\delta(u))=\alpha$. We denote this by $\phi \sim_u \alpha$. 

It will also be convenient to consider \new{the} coordinate ring of $\C^{\N^k_d}$, which is the polynomial ring $R_d:=\C[x_{\alpha}\mid \alpha\in \N^k_d]$. 
For a graph $G=(V,E)$ we define 
\begin{equation}
R_G:=\prod_{v\in V} R^{\new{v}}_{\deg(v)}.\nonumber
\end{equation}
For $F\subseteq E$ and $\phi:F\to [k]$ define
\begin{equation}
p^F_\phi(G):=\sum_{\substack{\psi:E\to [k] \\\psi|_F=\phi}}\prod_{v\in V} x^v_{\psi(\delta(v))}\in R_G.
\label{eq:pf restrict}
\end{equation}
So $p^F_{\phi}(G)$ is a polynomial and we can evaluate this at $h\in S_G$.
In particular, if $F=\emptyset$, then $p^F_{\phi}(G)(h)$ just coincides with the ordinary tensor network contraction $p(G)(h)$.

In the two lemmas below we assume that we have fixed some graph $G=(V,E)$ and $\eta,\delta>0$.
Moreover, for $u\in V$ we let $\delta(u)\subseteq E$ be the set of edges incident with $u$ and $N(u)\subset V$ the set of neighbours of $u$.

\begin{lemma}\label{lem:angle bound}
Let $\tau>0$, let $F\subseteq E$, let $\phi:F\to [k]$ and let $u\in V$.
If for all $h\in S_G(\delta,\eta)$ and for each $\psi:F\cup \delta(u)\to [k]$ extending $\phi$ we have $p^{F\cup \delta(u)}_{\psi}(G)(h)\neq 0$
and moreover for each $v\in N(u)\cup\{u\}$:
\begin{equation}
|p^{F\cup \delta(u)}_{\psi}(G)(h)|\geq \tau\sum_{\substack{\alpha\in \N^k_{\deg(v)}\\ \psi\sim_v \alpha}}|h^v_{\alpha}|\left | \frac{\partial}{\partial{x^v_{\alpha}}}p^{F\cup\delta(u)}_{\psi}(G)(h)    \right|,\label{eq:assumption 2} 
\end{equation}
then the angle of $p^{F\cup\delta(u)}_{\psi'}(G)(h)$ and $p^{F\cup\delta(u)}_{\psi}(G)(h)$ for any $\psi,\psi':F\cup \delta(u)\to [k]$ extending $\phi$ is at most 
\begin{equation}
\frac{\delta(\Delta(G)+1)}{\tau \eta}.\label{eq:angle}
\end{equation}
\end{lemma}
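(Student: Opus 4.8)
The plan is to bound the angle between $p^{F\cup\delta(u)}_{\psi}(G)(h)$ and $p^{F\cup\delta(u)}_{\psi'}(G)(h)$ for two extensions $\psi,\psi'$ of $\phi$ by interpolating between them: since both are defined on $F\cup\delta(u)$ and agree on $F$, they differ only on the edge set $\delta(u)$, which has at most $\Delta(G)$ edges. So I would pick a path $\psi=\psi_0,\psi_1,\ldots,\psi_m=\psi'$ with $m\le |\delta(u)|\le\Delta(G)$, where consecutive $\psi_{i},\psi_{i+1}$ differ in the color of exactly one edge $e\in\delta(u)$. By the triangle inequality for angles it suffices to bound the angle between consecutive terms $p^{F\cup\delta(u)}_{\psi_i}(G)(h)$ and $p^{F\cup\delta(u)}_{\psi_{i+1}}(G)(h)$; summing over the at most $\Delta(G)$ steps should produce the factor $(\Delta(G)+1)$ in \eqref{eq:angle} (the ``$+1$'' presumably absorbing the last comparison or a small slack).

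For a single step, changing the color of one edge $e=uv\in\delta(u)$ from $c$ to $c'$ only affects, in each monomial $\prod_{w} x^w_{\psi(\delta(w))}$, the two variables indexed by the vertices $u$ and $v$ incident to $e$. The key observation is that the difference $p^{F\cup\delta(u)}_{\psi_{i+1}}(G)(h)-p^{F\cup\delta(u)}_{\psi_i}(G)(h)$ can be expressed in terms of the partial derivatives $\frac{\partial}{\partial x^v_\alpha}p^{F\cup\delta(u)}_{\psi_i}(G)(h)$ for $v\in\{u\}\cup N(u)$ and the gaps $|h^v_\alpha-h^v_{\alpha'}|<\delta$: moving one edge from color $c$ to $c'$ at vertex $w\in\{u,v\}$ replaces $h^w_{\alpha}$ by $h^w_{\alpha'}$ where $\alpha,\alpha'$ differ by one unit, and the telescoping over the two affected vertices, together with the product structure, lets one write the difference as a sum over $w\in\{u,v\}$ of terms each bounded by $\delta$ times $|h^w_{\beta}|$ times the relevant partial derivative evaluated at $h$ (one has to be a little careful and possibly split off the change at $u$ and at $v$ separately, which is why the hypothesis \eqref{eq:assumption 2} ranges over all $v\in N(u)\cup\{u\}$). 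Hypothesis \eqref{eq:assumption 2} then says exactly that this difference has norm at most $\frac{\delta}{\tau}|p^{F\cup\delta(u)}_{\psi_i}(G)(h)|$ (and likewise relative to $\psi_{i+1}$), and since the angle between two nonzero complex numbers $z,z'$ with $|z-z'|\le r|z|$ is at most $\arcsin r\le r$ for small $r$ — or one can use $|z-z'|\le r\min(|z|,|z'|)$ to get a clean $\arcsin$ bound — each step contributes angle at most $\delta/(\tau\eta)$ after also using $|h^w_\beta|\ge\eta$... wait, more precisely the $\eta$ enters because we normalize by $|p_\psi|$ and the bound on the derivative sum already carries the $|h^v_\alpha|\ge\eta$ implicitly; I would track constants carefully so that each single-edge step costs angle $\le \delta/(\tau\eta)$ and then multiply by the number of steps.

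The main obstacle I expect is the bookkeeping of the telescoping argument: precisely expressing the one-edge recoloring difference as a $\delta$-weighted combination of partial derivatives without double-counting the contribution at the shared endpoint, and making sure the count of ``steps'' plus boundary comparisons yields exactly $\Delta(G)+1$ rather than $\Delta(G)$ or $\Delta(G)+2$. A secondary technical point is that \eqref{eq:assumption 2} and the nonvanishing hypothesis are stated for the fully-extended colorings $\psi$ on $F\cup\delta(u)$, so each intermediate $\psi_i$ in the interpolation is itself such an extension and the hypotheses apply verbatim to it — I would note this explicitly. Once the single-step angle bound $\delta/(\tau\eta)$ is in hand and multiplied by at most $\Delta(G)+1$ comparisons (via the triangle inequality for angles, valid as long as the total stays below $\pi$, which one checks at the end or assumes within the regime of interest), the bound \eqref{eq:angle} follows.
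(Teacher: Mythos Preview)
Your approach has the right flavor but will not reach the constant $\Delta(G)+1$ in \eqref{eq:angle}, and the arcsin inequality goes the wrong way.

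The main gap is the edge-by-edge count. Recolouring a single edge $e=uv\in\delta(u)$ alters the local factors at \emph{both} endpoints, so when you telescope the difference $p^{F'}_{\psi_{i+1}}(G)(h)-p^{F'}_{\psi_i}(G)(h)$ you must invoke \eqref{eq:assumption 2} once for $u$ and once for $v$; each step therefore contributes angle of order $2\delta/(\tau\eta)$ rather than $\delta/(\tau\eta)$, and $\deg(u)\le\Delta(G)$ steps give roughly $2\Delta(G)\,\delta/(\tau\eta)$. The overcount is structural: the vertex $u$ is charged once per incident edge instead of once in total, and the triangle inequality for angles cannot undo this. (Also, $\arcsin r\ge r$ on $[0,1]$, not $\le r$, so the passage from $|z-z'|\le r|z|$ to an angle bound costs rather than saves a factor.)

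The paper avoids both problems by interpolating in $h$-space rather than in $\psi$-space. Writing $F'=F\cup\delta(u)$, given $\psi,\psi'$ it constructs $g\in S_G(\delta,\eta)$ from $h$ by reindexing the entries of $h^v$ at each $v\in N(u)\cup\{u\}$ (shifting indices by the change $\psi'-\psi$ on the edges between $u$ and $v$) so that $p^{F'}_{\psi'}(G)(h)=p^{F'}_{\psi}(G)(g)$. Every entry of $g$ is some entry of $h$, so $g$ remains in $S_G(\delta,\eta)$ and $|g^v_\alpha-h^v_\alpha|\le\delta$. Since $p^{F'}_\psi(G)$ is nonvanishing on all of $S_G(\delta,\eta)$, one fixes a branch of $\ln p^{F'}_\psi(G)$ there and bounds the angle, which is exactly $|\Im\ln p^{F'}_\psi(G)(h)-\Im\ln p^{F'}_\psi(G)(g)|$, by the fundamental theorem of calculus along the segment from $h$ to $g$. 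The resulting bound is a sum over $v\in N(u)\cup\{u\}$ of $\sum_{\alpha:\psi\sim_v\alpha}|\partial_{x^v_\alpha}\ln p^{F'}_\psi(G)|\cdot|g^v_\alpha-h^v_\alpha|$; for each fixed $v$, \eqref{eq:assumption 2} together with the lower bound $\eta$ on the entries yields $\sum_\alpha|\partial_{x^v_\alpha}\ln p^{F'}_\psi(G)|\le 1/(\tau\eta)$, and there are at most $\Delta(G)+1$ vertices in $N(u)\cup\{u\}$. The two ingredients your sketch is missing are thus (i) converting a change of $\psi$ into a change of $h$, which allows per-vertex rather than per-edge accounting, and (ii) working with $\Im\ln$ directly so that no $\arcsin$ enters.
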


\begin{proof}
Let us write $F'=F\cup \new{\delta(u)}$ and fix $\psi:F'\to [k]$.
Since $p^{F'}_{\psi}(G)\neq 0$ on $S_G(\delta,\eta)$ we can consider a branch of $\ln (p^{F'}_{\psi}(G))$ on $S_G(\delta,\eta)$.
For any $v\in N(u)\cup \{u\}$ and $\alpha\in \N^k_{\deg(v)}$ we have
\begin{equation}
\frac{\partial}{\partial{x^v_{\alpha}}}\ln (p^{F'}_{\psi}(G))=\frac{\frac{\partial}{\partial{x^v_{\alpha}}}p^{F'}_{\psi}(G)}{p^{F'}_{\psi}(G)}.\label{eq:partial one}
\end{equation}
Note that \eqref{eq:partial one} equals zero if $\alpha$ is not $v$-compatible with $\psi$.
Then for each $h\in S_G(\delta,\eta)$ we obtain by \eqref{eq:assumption 2} that
\begin{equation}
\sum_{\substack{\alpha\in \N^{k}_{\deg(v)}\\ \psi\sim_v \alpha}}\left |\frac{\partial}{\partial{x^v_{\alpha}}}\ln (p^{F'}_{\psi}(G)(h))\right |\leq \frac{1}{\tau\min\{|h_\alpha|\mid h\in S_G(\delta,\eta),\alpha\in \N^k_{\deg(v)}\}}\leq \frac{1}{\tau\eta}. \label{eq:log bound}
\end{equation}

Next, let $\psi': F'\to [k]$ be another map that extends $\phi$ and define an element $g=\prod_{v\in V}g^v \in S_G(\delta,\eta)$ as follows.
If $v\notin N(u)\cup\{u\}$, then $g^v_{\alpha}=h^v_{\alpha}$ for all $\alpha\in \N^k_{\deg(v)}$.
If $v\in N(u)$, let $E(uv)$ denote the set of edges connecting $u$ and $v$ ($G$ might have multiple edges). 
Consider $\beta=\psi(E(uv))$ and $\beta'=\psi'(E(uv))$ as elements of $\N^k$.
Then for each $\alpha\in \N^k_{\deg(v)}$, set $\alpha':=\alpha-\beta+\beta'$. 
Then define $g^v_\alpha:=h^v_{\alpha'}$ if $\alpha'\geq 0$ and $g^v_{\alpha}=h^v_{\alpha}$ otherwise.
Finally, set $g^u_\alpha=h^u_{\alpha}$ if $\alpha \neq \psi(\delta(u))$ and $g^u_\alpha=h^u_\beta$ otherwise, where $\beta=\psi'(\delta(u))$.
Note that $p^{F'}_{\psi'}(G)(h)=p^{F'}_{\psi}(G)(g)$.
This implies by the \new{fundamental theorem of calculus} that $|\Im \ln(p^{F'}_{\psi}(G)(h))-\Im \ln(p^{F'}_{\psi'}(G)(h))|
$ is bounded by 
\[
\max_{z\in S_G(\delta,\eta)}
\sum_{v\in N(u)\cup\{u\}}\sum_{\substack{\alpha\in \N^{k}_{\deg(v)}\\\psi\sim_v\alpha}}
\left |\frac{\partial}{\partial{x^v_{\alpha}}}\ln (p^{F'}_{\psi}(G)(z))\right |\cdot|g^v_{\alpha}-h^v_{\alpha}|\leq \frac{\delta(\Delta(G)+1)}{\tau \eta},
\]
where $\Im f$ denotes the imaginary part of a function $f$ and where the inequality is due to \eqref{eq:log bound} and the fact that $|h^v_{\alpha}-h^v_{\alpha'}|\leq \delta$ for any $\alpha,\alpha'\in \N^k_{\deg(v)}$.
This proves the lemma.
\end{proof}

\begin{lemma}\label{lem:pf bound}
Let $\theta\in [0,2\pi/3)$. Let $u\in V$ and let $F\subseteq E$ with $\delta(u)\subseteq F$.
Let $\phi:F\to[k]$.
Suppose that for each $v\in N(u)\cup\{u\}$, all $h\in S_G(\delta,\eta)$ and all maps $\psi,\psi':F\cup\delta(v)\to [k]$ extending $\phi$, we have $p^{F\cup\delta(v)}_{\psi}(G)(h)\neq 0$ and that the angle between $p^{F\cup \delta(v)}_{\psi}(G)(h)$ and $p^{F\cup \delta(v)}_{\psi'}(G)(h)$ is at most $\theta$.
Then for each $v\in N(u)\cup \{u\}$ and all $h\in S_G(\delta,\eta)$ we have
\begin{equation}
|p^{F}_\phi(G)(h)|\geq \cos(\theta/2) 
\sum_{\substack{\alpha\in \N^k_{\deg(v)}\\\phi\sim_v \alpha}} |h^v_{\alpha}|\left |\frac{\partial}{\partial{x^v_{\alpha}}}p^{F}_{\phi}(G)(h) \right |. \label{eq:pf bound} 
\end{equation}
\end{lemma}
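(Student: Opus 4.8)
The plan is to express $p^F_\phi(G)(h)$ as a sum over color extensions of $\phi$ to $\delta(v)$ and then apply Bukh's angle lemma (Lemma \ref{lem:angle bound 1}). Fix $v\in N(u)\cup\{u\}$. Every map $\psi:F\cup\delta(v)\to[k]$ extending $\phi$ is determined by $\phi$ together with the choice of colors on the edges of $\delta(v)\setminus F$, equivalently (since these colors together with the already-fixed colors on $\delta(v)\cap F$ determine the multiset $\alpha=\psi(\delta(v))\in\N^k_{\deg(v)}$) by an $\alpha$ with $\phi\sim_v\alpha$. Grouping the defining sum \eqref{eq:pf restrict} for $p^F_\phi(G)$ according to the value $\alpha$ that $v$ sees, I would write
\begin{equation}
p^F_\phi(G)=\sum_{\substack{\alpha\in\N^k_{\deg(v)}\\\phi\sim_v\alpha}} x^v_\alpha\cdot\frac{\partial}{\partial x^v_\alpha}p^{F\cup\delta(v)}_{\psi_\alpha}(G),\nonumber
\end{equation}
where $\psi_\alpha$ is any extension of $\phi$ with $\psi_\alpha(\delta(v))=\alpha$; here I use that $p^{F\cup\delta(v)}_{\psi_\alpha}(G)$ is a sum of monomials each of which is divisible by exactly $x^v_\alpha$ (to the first power), so $x^v_\alpha\,\partial_{x^v_\alpha}p^{F\cup\delta(v)}_{\psi_\alpha}(G)=p^{F\cup\delta(v)}_{\psi_\alpha}(G)$, and the latter polynomials, summed over $\alpha$, recover $p^F_\phi(G)$. (Note $\partial_{x^v_\alpha}p^{F\cup\delta(v)}_{\psi_\alpha}(G)$ does not depend on the choice of $\psi_\alpha$ among extensions with $v$ seeing $\alpha$.)

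Next I would evaluate at $h\in S_G(\delta,\eta)$ and set $z_\alpha:=h^v_\alpha\,\partial_{x^v_\alpha}p^{F\cup\delta(v)}_{\psi_\alpha}(G)(h)=p^{F\cup\delta(v)}_{\psi_\alpha}(G)(h)$, viewed as vectors in $\R^2$. By hypothesis each $z_\alpha$ is nonzero and any two of them — being of the form $p^{F\cup\delta(v)}_{\psi}(G)(h)$ and $p^{F\cup\delta(v)}_{\psi'}(G)(h)$ for extensions $\psi,\psi'$ of $\phi$ — make an angle at most $\theta<2\pi/3$. Applying Lemma \ref{lem:angle bound 1} gives
\begin{equation}
|p^F_\phi(G)(h)|=\Bigl\|\sum_{\alpha}z_\alpha\Bigr\|\geq\cos(\theta/2)\sum_\alpha\|z_\alpha\|=\cos(\theta/2)\sum_{\substack{\alpha\in\N^k_{\deg(v)}\\\phi\sim_v\alpha}}|h^v_\alpha|\,\Bigl|\frac{\partial}{\partial x^v_\alpha}p^F_\phi(G)(h)\Bigr|,\nonumber
\end{equation}
where in the last equality I again use that $\partial_{x^v_\alpha}p^F_\phi(G)=\partial_{x^v_\alpha}p^{F\cup\delta(v)}_{\psi_\alpha}(G)$ (both equal the sum of monomials of $p^F_\phi(G)$ containing $x^v_\alpha$, divided by $x^v_\alpha$; on the left this is because $p^F_\phi(G)$ is linear in $x^v_\alpha$, and differentiating the displayed decomposition term by term, only the $\alpha$-term survives). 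This yields \eqref{eq:pf bound}.

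The step I expect to require the most care is the bookkeeping in the first display: checking that the decomposition of $p^F_\phi(G)$ indexed by the multiset $\alpha$ seen at $v$ is exactly a sum of the already-defined polynomials $p^{F\cup\delta(v)}_{\psi_\alpha}(G)$, and in particular that $\partial_{x^v_\alpha}$ applied to $p^{F\cup\delta(v)}_{\psi_\alpha}(G)$ is well-defined independently of which extension $\psi_\alpha$ is chosen, and matches $\partial_{x^v_\alpha}p^F_\phi(G)$. This is where the linearity of the partition-function polynomial in each individual variable $x^v_\alpha$ (each vertex contributes exactly one factor $x^v_{\psi(\delta(v))}$ per coloring $\psi$) is used crucially; once that is set up, the rest is a direct invocation of Lemma \ref{lem:angle bound 1}.
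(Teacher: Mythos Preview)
Your argument has a real gap precisely at the bookkeeping step you flagged. The assertion that an extension $\psi:F\cup\delta(v)\to[k]$ of $\phi$ is determined by the multiset $\alpha=\psi(\delta(v))$ is false whenever $|\delta(v)\setminus F|\geq 2$: distinct colourings of the edges in $\delta(v)\setminus F$ can produce the same multiset at $v$. Moreover, for two such extensions $\psi,\psi'$ with $\psi(\delta(v))=\psi'(\delta(v))=\alpha$, the polynomials $p^{F\cup\delta(v)}_{\psi}(G)$ and $p^{F\cup\delta(v)}_{\psi'}(G)$ are in general different, because each edge $vw\in\delta(v)\setminus F$ also enters the factor $x^{w}_{\cdot}$ at the neighbour $w$. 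Consequently your displayed decomposition of $p^F_\phi(G)$ indexed by $\alpha$ (with one representative $\psi_\alpha$ each) is not valid, and neither is the identity $\partial_{x^v_\alpha}p^F_\phi(G)=\partial_{x^v_\alpha}p^{F\cup\delta(v)}_{\psi_\alpha}(G)$: the correct formula is
\[
\frac{\partial}{\partial x^v_\alpha}p^F_\phi(G)=\frac{1}{x^v_\alpha}\sum_{\substack{\psi:F\cup\delta(v)\to[k]\\ \psi|_F=\phi,\ \psi(\delta(v))=\alpha}}p^{F\cup\delta(v)}_{\psi}(G),
\]
a sum over \emph{all} extensions with the given $\alpha$, not a single one.

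The paper's proof avoids this by applying Lemma~\ref{lem:angle bound 1} to the finer decomposition $p^F_\phi(G)=\sum_{\psi}p^{F\cup\delta(v)}_{\psi}(G)$ indexed by all extensions $\psi$ (which is exactly the level at which the angle hypothesis is stated), giving $|p^F_\phi(G)(h)|\geq\cos(\theta/2)\sum_{\psi}|p^{F\cup\delta(v)}_{\psi}(G)(h)|$. Only afterwards does one group by $\alpha$ via the triangle inequality,
\[
\sum_{\psi:\psi(\delta(v))=\alpha}\big|p^{F\cup\delta(v)}_{\psi}(G)(h)\big|\ \geq\ \Big|\sum_{\psi:\psi(\delta(v))=\alpha}p^{F\cup\delta(v)}_{\psi}(G)(h)\Big|\ =\ |h^v_\alpha|\,\Big|\frac{\partial}{\partial x^v_\alpha}p^F_\phi(G)(h)\Big|,
\]
which yields \eqref{eq:pf bound}. (Your approach can be repaired along the same lines by setting $z_\alpha$ equal to this full inner sum rather than a single $p^{F\cup\delta(v)}_{\psi_\alpha}(G)(h)$; one then also needs the easy observation that sums of nonzero vectors lying in a common cone of angular width $\theta$ again lie in that cone, so the $z_\alpha$ still satisfy the hypothesis of Lemma~\ref{lem:angle bound 1}.)
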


\begin{proof}
First note that \eqref{eq:pf bound} is trivially true for $v=u$ (since we could replace $\cos(\theta/2)$ by $1$).
Now let $v\in N(u)$ and let $\alpha\in \N^k_{\deg(v)}$. 
If $\alpha$ is not $v$-compatible with $\phi$, then $\frac{\partial}{\partial x^v_{\alpha}}p^F_\phi(G)=0$; if $\alpha$ is $v$-compatible with $ \phi$, then
\begin{equation}
\frac{\partial}{\partial x^v_{\alpha}}p^F_\phi(G)=\frac{1}{x^v_\alpha}\sum_{\substack{\psi:F\cup \delta(v)\to [k]\\\psi|_F=\phi,\psi(\delta(v))=\alpha}}p^{F\cup \delta(v)}_\psi(G).\label{eq:pd pf}
\end{equation}
Since 
\[
p^F_\phi(G)=\sum_{\substack{\psi:F\cup \delta(v)\to [k]\\\psi|_F=\phi}}p^{F\cup \delta(v)}_\psi(G),
\]
we obtain that for any $h\in S_G(\delta,\eta)$,
\begin{align}
|p^F_\phi(G)(h)|&\geq \cos(\theta/2)\sum_{\substack{\psi:F\cup \delta(v)\to [k]\\\psi|_F=\phi}}\big|p^{F\cup \delta(v)}_\psi(G)(h)\big|		\label{eq:pd pf2}
\\
&\geq \cos(\theta/2)\sum_{\substack{\alpha\in \N^k_{\deg(v)}\\ \phi\sim_v\alpha}}
 \bigg|\sum_{\substack{\psi:F\cup \delta(v)\to [k]\\\psi|_F=\phi,\psi(\delta(v))=\alpha}}p^{F\cup \delta(\new{u})}_\psi(G)(h)\bigg|	\nonumber
 \\
&=
\cos(\theta/2)\sum_{\substack{\alpha\in \N^k_{\deg(v)}\\ \phi\sim_v\alpha}}
|h^u_\alpha|\left|\frac{\partial}{\partial x^v_{\alpha}}p^F_\phi(G)(h)\right|,\nonumber
\end{align}
where the first inequality in \eqref{eq:pd pf2} is due to Lemma \ref{lem:angle bound 1},
since by assumption for each $\psi,\psi'$ and any $h\in S_G(\delta,\eta)$ we have that the angles 
between $p^{F\cup \delta(v)}_\psi(G)(h)$ and $p^{F\cup \delta(v)}_{\psi'}(G)(h)$ differ by no more than $\theta$, the second inequality is obvious and the equality in \eqref{eq:pd pf2} is due to \eqref{eq:pd pf}. This proves the lemma.
\end{proof}

We can now give a proof of Theorem \ref{thm:absence of roots}.
\begin{proof}[Proof of Theorem \ref{thm:absence of roots}]
Fix a graph $G=(V,E)$ and $h\in S_G(\delta,\eta)$.
If we plug in $\tau=\cos(\theta/2)$ in \eqref{eq:angle}, then by assumption we have that \eqref{eq:angle} is at most $\theta$.
We will first show that the following three statements hold:
\begin{equation}\label{eq:statements}
\begin{array}{cl}
\text{(i)}&\text{For each $F\subseteq E$, for each $\phi:F\to [k]$ and for all $h\in S_G(\delta,\eta)$ we have} 
\\&p^F_\phi(G)(h)\neq 0.
\\
\text{(ii)}&\text{For each $F\subseteq E$ and each $u\in V$ let $\phi:F\to [k]$ and write $F'=F\cup \delta(u)$.}
\\&
\text{Then for each $\psi,\psi':F'\to [k]$ extending $\phi$ and each $h\in S_G(\delta,\eta)$} 
\\&\text{the angle between $p^{F\new{'}}_\psi(G)(h)$ and $p^{F\new{'}}_{\psi'}(G)(h)$ is at most $\theta$.}
\\
\text{(iii)}& \text{For each $F\subseteq E$ and $u\in V$ such that $\delta(u)\subseteq F$ let $\phi:F\to [k]$.}
\\&\text{Then for all $h\in S_G(\delta,\eta)$ and each $v\in N(u)\cup \{u\}$ we have }
\\&\quad \quad \quad |p^{F}_\phi(G)(h)|\geq \cos(\theta/2) \sum_{\substack{\alpha\in \N^k_{\deg(v)}\\\phi\sim_v \alpha}} |h^v_{\alpha}|\left |\frac{\partial}{\partial{x^v_{\alpha}}}p^{F}_{\phi}(G)(h) \right |.
\end{array}
\end{equation}

Note that \eqref{eq:statements} (i) for $F=\emptyset$ already implies that $p(G)(h)\neq 0$ for all $h\in S_G(\delta,\eta)$.

We will now prove that \eqref{eq:statements} (i), (ii) and (iii) hold for all sets $F\subseteq E$ by induction on $|E\setminus F|$.
If $F=E$, then (i) holds since $p^E_\phi(G)(h)$ is just the product of $|V|$ nonzero numbers.
Clearly, (ii) also holds.
Moreover, (iii) also holds since for each $v$, 
\[
\sum_{\substack{\alpha\in \N^k_{\deg(v)}\\\phi\sim_v \alpha}} h^v_{\alpha}\frac{\partial}{\partial{x^v_{\alpha}}}p^{E}_{\phi}(G)(h)=p^E_{\phi}(G)(h)
\]
\new{and the sum in this equation consists of only one term.}

Let now $F$ be a subset of $E$ of size strictly smaller than $E$.
Let $u\in V$.
If $\delta(u)\subseteq F$, then (ii) is clearly true. So we may assume that $\delta(u)\new{\nsubseteq} F$. 
Let $\phi:F\to [k]$ and let $\psi,\psi':F\cup\delta(u)\to [k]$ be maps that extend $\phi$. 
Then by induction from (i) and (iii) (since $|F\cup \delta(u)|>|F|$) we know that the conditions of Lemma \ref{lem:angle bound} are satisfied with $\tau=\cos(\theta/2)$. This implies that (ii) holds for the set $F$.

To prove that (i) holds for $F$, fix $v$ such that $\delta(v)\new{\nsubseteq} F$.
Then 
\[p^F_\phi(G)(h)=\sum_{\substack{\psi:F\cup\delta(v)\to [k]\\ \psi_{|_F}=\phi}}p^{F\cup\delta(v)}_{\psi}(G)(h).
\]
As by induction the $p^{F\cup\delta(v)}_{\psi}(G)(h)$ are nonzero and by (ii) we know that their angles differ by at most $\theta$, Lemma \ref{lem:angle bound 1} implies that $p^F_\phi(G)(h)$ is nonzero for any $h\in S_G(\delta,\eta)$.

Finally, take $u$ such that $\delta(u)\subseteq F$. If no such $u$ exists there is nothing to prove for (iii).
Fix $v\in N(u)\cup \{u\}$. 
Since the sum in \eqref{eq:pf bound} has only one contributing factor if $\delta(v)\subseteq F$, we may assume that $\delta(v)\new{\nsubseteq} F$.
By induction, from (ii) we know that the angle between $p^{F\cup \delta(v)}_{\psi}(G)(h)$ and $p^{F\cup \delta(v)}_{\psi'}(G)(h)$ for any $\psi,\psi':F\cup \delta(v)\to [k]$ extending $\phi$ is at most $\theta$, as $|F\cup \delta(v)|>|F|$. So Lemma \ref{lem:pf bound} implies that (iii) holds for $F$.
So by induction we conclude that \eqref{eq:statements} holds for all sets $F$.

To finish the proof write $V=\{v_1,\ldots,v_n\}$ and let $F_0=\emptyset$. 
For $i>0$ let $F_i:=F_{i-1}\cup \delta(v_i)$.
We claim that for each $i=0,\ldots,n$, $\phi:F_i\to [k]$ and $h\in S_G(\delta,\eta)$ we have
\begin{equation}
|p^{F_i}_\phi(G)(h)|\geq k^{|E\setminus F_i|}\eta^n (\cos(\theta/2))^{n-i}.	\label{eq:inductive bound}
\end{equation}
Indeed, for $i=n$ this is clearly true.
Next, pick any $i<n$ and $\phi:F_i\to[k]$. Then
\begin{align*}
|p^{F_i}_\phi(G)(h)|&=\bigg|\sum_{\substack{\psi:F_{i+1}\to [k]\\\psi_{|_{F_i}}=\phi}}p^{F_{i+1}}_\psi(G)(h)\bigg|
\geq \cos(\theta/2)\sum_{\substack{\psi:F_{i+1}\to [k]\\\psi_{|_{F_i}}=\phi}}|p^{F_{i+1}}_\psi(G)(h)|
\\
&\geq \cos(\theta/2)k^{|F_{i+1}\setminus F_i|}k^{|E\setminus F_{i+1}|}\cos(\theta/2)^{n-(i+1)}\eta^n,
\end{align*}
where the first inequality follows from \eqref{eq:statements} (ii), and the second by induction.
This shows \eqref{eq:inductive bound} and finishes the proof. 
\end{proof}

\section{Approximation algorithms}\label{sec:algorithms}
In this section we will consider algorithms for computing tensor network contractions and evaluations of graph polynomials.
Both algorithms are based on a method of Barvinok \cite{B14a} (which has also been used by Barvinok \cite{B14b,B15} and Barvinok and Sober\'on \cite{BS14a,BS14b}), which we describe first.
\subsection{Approximating evaluations of polynomials}
Let $p\in \C[z]$ be a polynomial of degree $d$ and suppose that $p(z)\neq 0$ for all $z$ in a closed disk of radius $M$.
In this section we will describe the method of Barvinok \cite{B14a} to find a multiplicative approximation to $p(t)$ for $t\in \C$, which is only based on knowing the values of $\frac{d^m}{dt^m}p|_{t=0}$.

Define the univariate function $f$ by
\begin{equation}
f(z):=\ln p(z)
\end{equation}
for $z\in \C$, where we fix a (continuous) branch of logarithm by fixing the principal branch of the logarithm for $p(0)$.

To approximate $p$ at $t\in \C$, we will find an additive approximation to $f$ at $t$ using the the Taylor expansion around $z=0$. This can then be transformed to give a multiplicative approximation to $p$.

Let
\begin{equation}
T_n(f)(t):=f(0)+\sum_{m=1}^n \left.\frac{t^m}{m!}\frac{d^m}{dz^m}f(z)\right|_{z=0}.
\label{eq:taylor}
\end{equation}
Note that \eqref{eq:taylor} only depends on the values of $\frac{d^m}{dt^m}p|_{t=0}$ for $m=0,\ldots,n$. (Where we agree that the $0^{\text{th}}$ derivative is just the function itself.)
To see this note that $f'(z)= p'(z)/p(z)$, that is, $p'(z)=p(z)f'(z)$. So for $m\geq 1$ we have
\begin{equation}
\left.\frac{d^m}{dz^m}p(z)\right|_{t=0}=\sum_{j=0}^{m-1}\binom{m-1}{j}\left(\left.\frac{d^j}{dz^j}p(z)\right|_{z=0}\right)\left(\left.\frac{d^{m-j}}{dz^{m-j}}f(z)\right|_{z=0}\right). \label{eq:derivative}
\end{equation}
This implies that if we can compute the values of $\frac{d^j}{dz^j}p(z)|_{z=0}$ for $j=0,\ldots,n$, then \eqref{eq:derivative} provides a nondegenerate (as $p(0)\neq 0$) triangular system of equations to compute $\frac{d^j}{dz^j}f(z)|_{z=0}$ for $j=1,\ldots,n$, which can be done in time $n^{O(1)}$.
To summarise:
\begin{lemma}\label{lem:p implies f}
If the values $\frac{d^m}{dz^m}\new{p}|_{z=0}$ are given for $m=0,\ldots,n$, then $\frac{d^m}{dz^m}f|_{z=0}$ can be computed in time $n^{O(1)}$ for each $m=0,\ldots,n$.
\end{lemma}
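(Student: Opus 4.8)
The plan is to make precise the claim following equation \eqref{eq:derivative}, namely that the relations \eqref{eq:derivative} form a triangular linear system that recovers the derivatives of $f$ at $0$ from those of $p$ at $0$. First I would fix notation: write $p_m := \left.\frac{d^m}{dz^m}p(z)\right|_{z=0}$ and $f_m := \left.\frac{d^m}{dz^m}f(z)\right|_{z=0}$ for $m=0,\dots,n$. The base case is $f_0 = \ln p_0$, using the principal branch as stipulated (this is where $p(0)\neq 0$ first enters, guaranteeing $\ln p_0$ is well-defined and computable). For $m\geq 1$, isolate the $j=0$ term in \eqref{eq:derivative}: since $\binom{m-1}{0}=1$ and the $j=0$ term is $p_0 \cdot f_m$, we get
\begin{equation}
f_m = \frac{1}{p_0}\left( p_m - \sum_{j=1}^{m-1}\binom{m-1}{j} p_j\, f_{m-j} \right).\nonumber
\end{equation}
The right-hand side involves only $p_0,\dots,p_m$ (given) and $f_1,\dots,f_{m-1}$ (already computed at earlier stages), so this is a genuine recursion, and it is well-posed precisely because $p_0\neq 0$.

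The second step is to justify \eqref{eq:derivative} itself, which the excerpt asserts but does not derive; I would include a one-line argument: differentiating the identity $p(z) = p(z) f'(z) \cdot$ — more precisely $p'(z) = p(z) f'(z)$ — a total of $m-1$ times and applying the Leibniz rule gives $p^{(m)}(z) = \sum_{j=0}^{m-1}\binom{m-1}{j} p^{(j)}(z)\, f^{(m-j)}(z)$; evaluating at $z=0$ yields \eqref{eq:derivative}. This is a routine computation, so I would not belabor it.

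The final step is the complexity bound. Computing $f_0$ is one logarithm evaluation, i.e. $O(1)$ arithmetic operations. For each $m$ from $1$ to $n$, computing $f_m$ via the recursion costs $O(m)$ arithmetic operations (a sum of $m-1$ terms, each a product of a binomial coefficient and two known quantities, plus one division); the binomial coefficients up to row $n$ can be precomputed in $O(n^2)$ total. Summing over $m$ gives $O(n^2)$ arithmetic operations overall, which is $n^{O(1)}$, as claimed. I expect no real obstacle here — the only subtlety worth a sentence is that the statement is about the \emph{number} of arithmetic operations and the existence of a well-defined output, not about bit-complexity, so I would phrase the conclusion as: each $f_m$ can be computed in time $n^{O(1)}$ from the data $p_0,\dots,p_n$, completing the proof of Lemma \ref{lem:p implies f}.
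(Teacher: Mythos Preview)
Your proposal is correct and follows essentially the same approach as the paper: the paper's proof is precisely the paragraph preceding the lemma (the lemma is introduced with ``To summarise:''), which derives \eqref{eq:derivative} from $p'=pf'$ via Leibniz and observes that the resulting triangular system is nondegenerate since $p(0)\neq 0$; you have simply spelled out the recursion and the $O(n^2)$ operation count in more detail.
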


The quality of the approximation \eqref{eq:taylor} depends on the complex roots of $p$.
\begin{lemma}\label{lem:approx}
Let $p$ be a polynomial of degree $d$ such that $p(0)\neq 0$ for all $z$ such that $|z|\leq M$ for some $M>0$  and let $t\in \C$ such that $|t|<M$.
Then
for $n=O(\ln1/\eps)$ we have 
\begin{equation}
\big||T_n(f)(t)|-\ln|p(t)|\big|\leq d\eps,   \label{eq:add approx}
\end{equation}
and for $n=O(\ln d/\eps)$ we have
\begin{equation}
e^{-\eps}\leq\left|\frac{e^{T_n(f)(t))}}{p(t)}\right|\leq e^\eps\text{ and the angle between $p(t)$ and $e^{T_n(f)(t)}$ is at most $\eps$.}\label{eq:mult approx}
\end{equation}
\end{lemma}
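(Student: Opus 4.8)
The plan is to bound the tail of the Taylor series of $f=\ln p$ by exploiting the factorization of $p$. First I would write $p(z)=p(0)\prod_{j=1}^d(1-z/z_j)$, where $z_1,\dots,z_d$ are the roots of $p$ (with multiplicity); by hypothesis $|z_j|\geq M$ for all $j$. Then $f(z)=\ln p(0)+\sum_{j=1}^d\ln(1-z/z_j)$, and since each $\ln(1-z/z_j)$ is analytic on $|z|<M$, so is $f$, and its Taylor coefficients are additive over $j$. Concretely, using $\ln(1-w)=-\sum_{m\geq1}w^m/m$, the $m$-th Taylor coefficient of $f$ at $0$ is $-\frac{1}{m}\sum_{j=1}^d z_j^{-m}$, so the remainder satisfies
\[
\big|f(t)-T_n(f)(t)\big|=\Bigl|\sum_{m>n}\frac{t^m}{m}\sum_{j=1}^d z_j^{-m}\Bigr|\leq \sum_{m>n}\frac{d}{m}\Bigl(\frac{|t|}{M}\Bigr)^m\leq \frac{d}{n+1}\cdot\frac{(|t|/M)^{n+1}}{1-|t|/M}.
\]

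Set $r:=|t|/M<1$, a fixed constant once $t$ and $M$ are fixed. The displayed bound is at most $d\,r^{n+1}/((n+1)(1-r))$, so choosing $n=O(\ln(1/\eps))$ (with the constant depending on $r$) makes it at most $d\eps$; this already contains more than enough room, and gives \eqref{eq:add approx} after noting $\big||T_n(f)(t)|-\ln|p(t)|\big|\leq |T_n(f)(t)-f(t)|$ because $\ln|p(t)|=\Re f(t)$ and $\big||a|-\Re b\big|\leq|a-b|$ for complex $a,b$ — actually one should be slightly careful here and use $\big|\,|T_n(f)(t)|-|f(t)|\,\big|\leq |T_n(f)(t)-f(t)|$ together with $|f(t)|\geq|\Re f(t)|=\big|\ln|p(t)|\big|$; I would instead directly estimate $\big|\Re T_n(f)(t)-\ln|p(t)|\big|\leq|T_n(f)(t)-f(t)|$ and remark that in the intended application one uses $\Re T_n(f)(t)$, matching the statement up to this harmless real-part convention. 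For \eqref{eq:mult approx}, the same estimate with the sharper requirement that $|f(t)-T_n(f)(t)|\leq\eps$ forces $n$ large enough that $d\,r^{n+1}/((n+1)(1-r))\leq\eps$, i.e. $n=O(\ln(d/\eps))$; then $e^{T_n(f)(t)}/p(t)=e^{T_n(f)(t)-f(t)}$ has modulus in $[e^{-\eps},e^{\eps}]$ and argument (the imaginary part of $T_n(f)(t)-f(t)$) of absolute value at most $\eps$, which is exactly the multiplicative $\eps$-approximation.

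The only mild subtlety — and the step I would be most careful about — is the branch-of-logarithm bookkeeping: the lemma fixes the principal branch at $p(0)$ and then continues $f$ along, say, the segment from $0$ to $t$; I must check this agrees with $\sum_j\ln(1-z/z_j)$ using principal branches of each summand. It does, since $1-z/z_j$ never crosses the negative reals as $z$ moves in $|z|<M$ (its real part could be negative but the path $z\mapsto 1-z/z_j$ stays in the disk of radius $r<1$ about $1$, which avoids $(-\infty,0]$), so each principal $\ln(1-z/z_j)$ is continuous on $|z|<M$, hence their sum is the analytic continuation of $f$ and its Taylor expansion is the termwise sum above. Everything else is the elementary geometric-series tail bound; there is no real obstacle, only the need to track constants so that the exponents $O(\ln(1/\eps))$ and $O(\ln(d/\eps))$ come out as claimed.
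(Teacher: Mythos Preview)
Your argument is essentially the same as the paper's: factor $p$, expand each $\ln(1-z/\zeta_j)$ as a power series, and bound the tail by the geometric series $\frac{d\,r^{n+1}}{(n+1)(1-r)}$ with $r=|t|/M$; the paper then reads off both conclusions from this single estimate exactly as you do. Your extra care about the branch of logarithm is not in the paper but is harmless, and your observation that the additive statement should really concern $\Re T_n(f)(t)$ rather than $|T_n(f)(t)|$ is correct---the paper's own proof in fact establishes $\bigl|\Re T_n(f)(t)-\ln|p(t)|\bigr|\le d\eps$ (via $|e^{T_n(f)(t)}|\le |p(t)|e^{d\eps}$ and the reverse inequality), so the absolute value in \eqref{eq:add approx} is a typo which you have handled appropriately.
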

\begin{proof}
Let $q:=|t|/M$. We will first show 
\begin{equation}
|f(t)-T_n(f)(t)|\leq\frac{d q^{n+1}}{(n+1)(1-q)}.	\label{eq:approx}
\end{equation}
The proof of \eqref{eq:approx} is quite similar to the proof of Lemma 1.2 from \cite{BS14a}, but for completeness we will give it here.

Since $p(0)\neq 0$ we may write
\begin{equation}
p(z)=p(0)\prod_{i=1}^d\left(1-\frac{z}{\zeta_i}\right),\label{eq:pol}
\end{equation}
where $\zeta_1,\ldots,\zeta_d\in \C$ are the roots of $p$.
By assumption we know that $|\zeta_i|\geq M$ for each $i=1,\ldots,d$.
This implies that
\begin{equation}
f(z)=\ln p(z)=\ln p(0)+\sum_{i=1}^d \ln \left(1-\frac{z}{\zeta_i}\right) \label{eq:expand f}
\end{equation}
for any $z$ such that $|z|\leq |t|$.
Using the standard Taylor expansion for the principal branch of the logarithm, we obtain
\begin{equation}
\ln \left(1-\frac{t}{\zeta_i}\right)=-\sum_{m=1}^n \frac{1}{m}\frac{t^m}{{\zeta_i}^m} +R_n,\label{eq:taylor log}
\end{equation}
where $R_n$ satisfies
\[
|R_n|=\left|\sum_{m=n+1}^\infty \frac{1}{m}\frac{t^{m}}{{\zeta_i}^m}\right|\leq \frac{1}{(n+1)}\sum_{m=n+1}^\infty \frac{|t|^m}{M^m}\leq \frac{q^{n+1}}{(n+1)(1-q)},
\]
since $q=|t|/M<1$.
Noting that 
\[
-\frac{1}{m}\sum_{i=1}^d \frac{t^m}{{\zeta_i}^m}=\frac{t^m}{m!}\left.\frac{d^m}{dz^m}f(z)\right|_{z=0},
\]
\eqref{eq:approx} now follows by combining \eqref{eq:expand f} with \eqref{eq:taylor log} and using the bound on $|R_n|$. 

Take $n=C(\ln d/\varepsilon)$ (where $C\geq (\ln \new{1/}q)^{-1}$ is large enough so that $1/n\leq 1-q$).
Then the right-hand side of \eqref{eq:approx} is at most $\eps$. 
Write $z=T_n(f)(t)$. Then we have 
\new{$|e^{f(t)-z}|=|e^{f(t)-z}|\leq e^{|f(t)-z|}\leq e^{\eps}$}
and \new{similarly} $|e^{-z+f(t)}|\leq e^{\eps}$. 
(This follows form the fact that for a complex number $y=a+bi$, we have $|e^y|=e^a$ and $|a|\leq |y|$.)
Moreover, \new{the angle between $e^{z}$ and $e^{f(t)}$ is bounded by $|\Im \ln e^{z-f(t)}|\leq |\ln e^{z-f(t)}|\leq \eps$.}
This shows \eqref{eq:mult approx}.

To see \eqref{eq:add approx}, take $n=O(\ln1/\eps)$ such that the right-hand side of \eqref{eq:approx} at most $d\eps$.
Then we have $|e^{z}|\leq |e^{f(t)}|e^{d\eps}$ and $|e^{f(t)}|\leq |e^{\new{z}}|e^{d\eps}$, which is equivalent to \eqref{eq:add approx}.
\end{proof}

\subsection{Approximating tensor network contractions}
In this section we will combine Theorem \ref{thm:absence of roots} with Lemmas \ref{lem:p implies f} and \ref{lem:approx} to give a quasi-polynomial time algorithm for the computation of tensor network contractions for a large class of tensor networks.
In particular, Theorem \ref{thm:algorithms} will be proved here.

Consider a graph $G=(V,E)$ and $h\in S_G$. Suppose that \new{$h$ satisfies 
\begin{equation}\label{eq:condition h}
|h^v(\alpha)-1|\leq\frac{1}{1.01}\frac{\beta^*(\Delta+1)}{2(\Delta(G)+1)}
\end{equation}
} 
for all $v\in V$ and $\alpha\in \N^k_{\deg(v)}$.
Let $I=(I^v)_{v\in V}\in S_G$ be equal to the all-ones tensor for each $v\in V$, i.e., $I^v_{\alpha}=1$ for all $\alpha\in \N^k_{\deg(v)}$.
Define a univariate polynomial $q$ of degree $|V|$  by,
\begin{equation}
q(z):=p(G)(I+z(h-I)).	\label{eq:part pol}
\end{equation}
Note that $q(1)=p(G)(h)$.
So we are interested in computing the value of $q$ at $z=1$.
By Corollary \ref{thm:absence of roots}a we have that for any $|z|\leq 1.01$, $q(z) \neq 0$.
So by Lemma \ref{lem:approx} we can compute a fast $\eps$-approximation to $p(G)(h)$ provided we can compute the $m$th derivative of $q$ at $z=0$ efficiently.
This can be done in time $|V|^{O(m)}$ for each $m$.
Indeed, for any $m$, 
\begin{align}
\frac{d^m}{dz^m}q(z)|_{z=0}&=\sum_{\phi:E\to [k]}\frac{d^m}{dz^m}\prod_{v\in V}(I\new{+}z(h-I))(\phi(\delta(v)))\big|_{z=0} \nonumber
\\
&=\sum_{\phi:E\to [k]}\sum_{U\subseteq V}\prod_{v\in U}(h-I)(\phi(\delta(v)))\frac{d^m}{dz^m}z^{|U|}\big|_{z=0}\nonumber
\\
&=\sum_{U\subseteq V, |U|=m}\sum_{\phi:E\to [k]}m!\prod_{v\in U}(h-I)(\phi(\delta(v))). \label{eq:computing derivative}
\end{align}
Let us denote by $E(U)$ the edges in $E$ that are incident with some vertex of $U$ for $U\subseteq V$.
Then \eqref{eq:computing derivative} is equal to
\[
m!\sum_{U\subseteq V, |U|=m}k^{|E\setminus E(U)|}\sum_{\phi:E(U)\to [k]}\prod_{v\in U}(h-I)(\phi(\delta(v))),
\]
and hence can be computed in time $|V|^m k^{m\Delta(G)}=|V|^{O(m)}$. 
By Lemmas \ref{lem:p implies f} and \ref{lem:approx}, we have the following result, which clearly implies Theorem \ref{thm:algorithms}.
\begin{theorem}\label{thm:approx tensor network}
Let $\eps>0$.
Then for any graph $G=(V,E)$ and any $h\in S_G$ such that \new{$h$ satisfies} \eqref{eq:condition h} for all $v\in V$ and $\alpha\in \N^k_{\deg(v)}$, we can compute a multiplicative $\eps$-approximation to $p(G)(h)$ in time $|V|^{O(\ln |V|-\ln \eps)}$, and an additive $\eps|V|$-approximation to $\ln |p(G)(h)|$ in time $|V|^{O(\ln(1/\eps))}$. 
\end{theorem}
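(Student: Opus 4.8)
The plan is to assemble Theorem~\ref{thm:approx tensor network} directly from the three ingredients already prepared in the excerpt: the zero-free region from Corollary~\ref{thm:absence of roots}a, the Taylor-truncation approximation from Lemma~\ref{lem:approx}, and the derivative-computation bookkeeping from Lemma~\ref{lem:p implies f}. The skeleton is: (1) reduce the tensor network contraction to the evaluation at $z=1$ of the univariate polynomial $q(z):=p(G)(I+z(h-I))$ of degree $|V|$; (2) use the hypothesis \eqref{eq:condition h} together with Corollary~\ref{thm:absence of roots}a to show $q(z)\neq 0$ on the closed disk of radius $M:=1.01$; (3) invoke Lemma~\ref{lem:approx} with $d=|V|$, $t=1$, $M=1.01$ to see that the truncated Taylor polynomial $T_n(f)(1)$ of $f=\ln q$ gives a multiplicative $\eps$-approximation to $q(1)=p(G)(h)$ for $n=O(\ln|V|-\ln\eps)$ and an additive $\eps|V|$-approximation to $\ln|p(G)(h)|$ for $n=O(\ln(1/\eps))$; (4) check, via Lemma~\ref{lem:p implies f}, that computing $T_n(f)(1)$ reduces to computing the first $n$ derivatives of $q$ at $0$, and bound the cost of the latter by $|V|^{O(n)}$ using the explicit formula \eqref{eq:computing derivative}.

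Step (2) is where the constants must be matched carefully. I would set $a_v=1$ for all $v$ and observe that the hypothesis \eqref{eq:condition h} says $|h^v(\alpha)-1|\leq \frac{1}{1.01}\cdot\frac{\beta^*(\Delta+1)}{2(\Delta(G)+1)}$. Writing $g:=I+z(h-I)$ for $|z|\leq 1.01$, we get $|g^v(\alpha)-1|=|z|\,|h^v(\alpha)-1|\leq 1.01\cdot\frac{1}{1.01}\cdot\frac{\beta^*(\Delta+1)}{2(\Delta(G)+1)}=\frac{\beta^*}{2(\Delta(G)+1)}$ (using $\Delta(G)\leq\Delta$ and monotonicity of $\beta^*$, so that $\beta^*(\Delta(G)+1)\leq\beta^*(\Delta+1)=\beta^*$), which is exactly the hypothesis of Corollary~\ref{thm:absence of roots}a with every $a_v=1$. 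Hence $q(z)=p(G)(g)\neq 0$ on the disk of radius $1.01$, and $M=1.01>1=|t|$ so Lemma~\ref{lem:approx} applies. (One should double-check that the factor $1/1.01$ in \eqref{eq:condition h} is precisely what buys the slack between $|t|=1$ and $M=1.01$; this is the only genuinely delicate numerical point.)

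Step (4) is routine but must be spelled out: expanding $\prod_{v\in V}(I+z(h-I))(\phi(\delta(v)))$ as a sum over subsets $U\subseteq V$ of the monomials $z^{|U|}\prod_{v\in U}(h-I)(\phi(\delta(v)))$, differentiating $m$ times and evaluating at $0$ kills all terms with $|U|\neq m$ and leaves $m!\sum_{|U|=m}\sum_{\phi:E\to[k]}\prod_{v\in U}(h-I)(\phi(\delta(v)))$; since the summand depends only on $\phi$ restricted to $E(U)$ (the edges meeting $U$), this equals $m!\sum_{|U|=m}k^{|E\setminus E(U)|}\sum_{\phi:E(U)\to[k]}\prod_{v\in U}(h-I)(\phi(\delta(v)))$, a sum of at most $\binom{|V|}{m}k^{m\Delta(G)}=|V|^{O(m)}$ terms each computable in constant time (recall $k$ and $\Delta$ are fixed). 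Feeding the resulting values of $\frac{d^m}{dz^m}q|_{z=0}$, $m=0,\dots,n$, into Lemma~\ref{lem:p implies f} produces the $\frac{d^m}{dz^m}f|_{z=0}$ in time $n^{O(1)}$, hence $T_n(f)(1)$ in total time $|V|^{O(n)}$; substituting $n=O(\ln|V|-\ln\eps)$ for the multiplicative case and $n=O(\ln(1/\eps))$ for the additive case gives the claimed running times $|V|^{O(\ln|V|-\ln\eps)}$ and $|V|^{O(\ln(1/\eps))}$.

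The main obstacle is not conceptual — everything reduces to the preceding lemmas — but rather the careful alignment of constants in step (2): ensuring that \eqref{eq:condition h}, together with the monotonicity chain $\beta^*(1)<\beta^*(2)<\cdots\leq x^*$ and the fact $\Delta(G)\leq\Delta$, really does certify that $q$ has no roots in the slightly enlarged disk $|z|\leq 1.01$, which is what Lemma~\ref{lem:approx} needs in order to make the geometric-series tail $\frac{d\,q_0^{\,n+1}}{(n+1)(1-q_0)}$ (with $q_0=1/1.01$) decay and force $n$ to be only logarithmic in $|V|$ and $1/\eps$. Everything after that is bookkeeping.
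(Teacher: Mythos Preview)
Your proof is correct and follows exactly the paper's own argument: define $q(z)=p(G)(I+z(h-I))$, use Corollary~\ref{thm:absence of roots}a to certify $q\neq 0$ on the disk $|z|\leq 1.01$, compute the low-order derivatives of $q$ at $0$ via the expansion you wrote (this is precisely the paper's \eqref{eq:computing derivative}), and feed them into Lemmas~\ref{lem:p implies f} and~\ref{lem:approx}. One small quibble on the point you yourself flagged as delicate: in step~(2) the monotonicity of $\beta^*$ gives $\beta^*(\Delta(G)+1)\leq\beta^*(\Delta+1)$, which goes the \emph{wrong} way for reducing to Corollary~\ref{thm:absence of roots}a (the corollary requires the bound with $\beta^*(\Delta(G)+1)$, not the larger $\beta^*(\Delta+1)$); this is moot, however, since the $\Delta$ appearing in \eqref{eq:condition h} is most naturally read as $\Delta(G)$, in which case no monotonicity is needed at all.
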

\begin{rem}
Let $\Delta\in \N$ and fix an edge-coloring model $h$ such that \new{$h$ satisfies} \eqref{eq:condition h} for all $d\leq \Delta$.
Then by Corollary \ref{thm:absence of roots}a we have that $|p(G)(h)|\geq e^{c|V|}$ for some constant $c>0$ (assuming $G$ is connected).
So Theorem \ref{thm:approx tensor network} provides a deterministic FPTAS to compute $\ln |p(G)(h)|$ on graphs with maximum degree at most $\Delta$.
\end{rem}

\begin{rem}
Theorem \ref{thm:approx tensor network} of course also applies to tensor networks that strictly satisfy the conditions of Corollaries \ref{thm:absence of roots}a and b, but we leave the details to the reader.
\end{rem}

\subsection{Approximating evaluations of graph polynomials}\label{sec:exp type}
In this section we will combine a result of Csikv\'ari and Frenkel \cite{CF12} with Lemma \ref{lem:p implies f} and Lemma \ref{lem:approx} to give a quasi-polynomial approximation scheme for evaluating a large class of graph polynomials, including the Tutte polynomial.

Let $\chi:\G\to\C$ be a graph parameter.
Define for a graph $G=(V,E)$ the graph polynomial $p_\chi(G)(z):=\sum_{k=1}^{|V|}\chi_k(G)z^k$, where for $k=1,\ldots,|V|$,
\begin{equation}
\chi_k(G):=\sum_{\substack{V=V_1\cup\ldots \cup V_k\\V_i\subseteq V \text{nonempty}\\ V_i\cap V_j=\emptyset \text{ if } i\neq j}}\prod_{i=1}^k\chi(G[V_i])	\label{eq:exp type}
\end{equation}
(here $G[V_i]$ is the subgraph of $G$ induced by $V_i$).
So $\chi(G)$ is a polynomial of degree at most $|V|$ with zero constant term and the coefficient of $z$ is equal to $\chi(G)$. If $\chi(K_1)\neq 0$, then the degree of $p_\chi$ is equal to $|V|$.
Csikv\'ari and Frenkel \cite{CF12} call graph polynomials that arise this way \emph{graph polynomials of exponential type}. 
(In fact, they give a different definition, but show that the definition above is equivalent to it.)

A graph polynomial of exponential type $p(G)(z)=\sum_{i=1}^na_i(G)z^i$ is called of \emph{bounded exponential type} if it is monic of degree $n=|V(G)|$ for each $G$ and if there exists a function $R:\N\to [0,\infty)$ such that for each $t\geq 1$ and $v\in V$ we have
\[
\sum_{\substack{S\subseteq V(G)\\v\in S,|S|=t}}|a_1(G[S])|\leq R(\Delta)^{t-1}
\]
Csikv\'ari and Frenkel proved that these polynomials have bounded roots on $\G_\Delta$:
\begin{theorem}[Csikv\'ari and Frenkel \cite{CF12}]
Let $p$ be a graph polynomial of bounded exponential type. For any $\Delta\in \N$ there exists a constant $c>0$ (one may take $c\approx 7.30319$) such that for each $G\in \G_\Delta$, the roots of $p(G)$ are contained in the set $\{z\in \C: |z|< cR(\Delta)\}$.
\end{theorem}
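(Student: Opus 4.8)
The plan is to prove everything at once by induction on the number of vertices, using the single recursion that the definition \eqref{eq:exp type} supplies. If $v$ is a vertex of $G$, then classifying a set partition of $V(G)$ by the block $S$ containing $v$ — and observing that deleting $S$ leaves an arbitrary set partition of $V(G)\setminus S$ with one block fewer — gives
\[
p(G)(z)=z\sum_{v\in S\subseteq V(G)}\chi(G[S])\,p\bigl(G[V(G)\setminus S]\bigr)(z),
\]
where $\chi=a_1$, where the monic hypothesis forces $\chi(K_1)=1$ (so the term $S=\{v\}$ equals $z\,p(G\setminus v)(z)$), and where we adopt the convention $p(K_0)(z)=1$.

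First I would fix $\Delta$ and an auxiliary constant $\kappa>1$, set $c=c(\kappa):=\kappa(2\kappa-1)/(\kappa-1)$, and prove by induction on $|V(H)|$, simultaneously over all $H\in\G_\Delta$, that for every $z$ with $|z|>c\,R(\Delta)$ one has: (i) $p(H)(z)\neq0$; and (ii) for every nonempty $T\subseteq V(H)$,
\[
\left|\frac{p\bigl(H[V(H)\setminus T]\bigr)(z)}{p(H)(z)}\right|\le\Bigl(\frac{\kappa}{|z|}\Bigr)^{|T|}
\]
(the case $T=V(H)$ also records the lower bound $|p(H)(z)|\ge(|z|/\kappa)^{|V(H)|}$). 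The base cases $H=K_0,K_1$ are immediate. For the inductive step it is enough to establish (i) and (ii) for $|T|=1$; the general case of (ii) then follows by deleting the vertices of $T$ from $H$ one at a time and multiplying the resulting one-vertex ratios — the first of these is the case just established for $H$ itself, and the rest are instances of (ii) for proper induced subgraphs of $H$, which have fewer vertices.

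For the step, divide the recursion by $z\,p(G\setminus v)(z)$ (justified by (i) for the smaller graph $G\setminus v$) to get $p(G)(z)=z\,p(G\setminus v)(z)(1+\mathcal{E})$ with
\[
\mathcal{E}=\sum_{\substack{v\in S\subseteq V(G)\\|S|\ge2}}\chi(G[S])\,\frac{p\bigl(G[V(G)\setminus S]\bigr)(z)}{p(G\setminus v)(z)}.
\]
Each ratio deletes the $|S|-1\ge1$ vertices of $S\setminus\{v\}$ from $G\setminus v$, so by (ii) for $G\setminus v$ its modulus is at most $(\kappa/|z|)^{|S|-1}$; summing over $S$ and invoking $\sum_{v\in S,\,|S|=t}|a_1(G[S])|\le R(\Delta)^{t-1}$ gives $|\mathcal{E}|\le\sum_{m\ge1}(\kappa R(\Delta)/|z|)^m=\frac{\kappa R(\Delta)/|z|}{1-\kappa R(\Delta)/|z|}$, valid since $|z|>cR(\Delta)>\kappa R(\Delta)$. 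Hence $|1+\mathcal{E}|\ge(1-2\kappa R(\Delta)/|z|)/(1-\kappa R(\Delta)/|z|)>0$ because $c\ge2\kappa$, so $p(G)(z)\neq0$ and
\[
\left|\frac{p(G\setminus v)(z)}{p(G)(z)}\right|\le\frac1{|z|}\cdot\frac{1-\kappa R(\Delta)/|z|}{1-2\kappa R(\Delta)/|z|}\le\frac{\kappa}{|z|},
\]
the final inequality being precisely the defining property of $c=\kappa(2\kappa-1)/(\kappa-1)$. Minimising $c(\kappa)$ over $\kappa>1$ — or running the slightly different bookkeeping of Csikv\'ari and Frenkel — yields the explicit numerical constant stated.

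The one delicate point is the choice of inductive hypothesis: one has to carry the quantitative ratio bound (ii), not merely non-vanishing, since $\mathcal{E}$ is itself assembled from such ratios on smaller induced subgraphs, and the geometric series bounding $|\mathcal{E}|$ converges — and stays strictly below $1$ — exactly because of the exponential decay $R(\Delta)^{t-1}$ built into the definition of bounded exponential type. Everything else is routine. (One could instead package this as a Koteck\'y--Preiss-style cluster-expansion estimate for a formal logarithm of $p(G)$, but the vertex-deletion recursion keeps it elementary.)
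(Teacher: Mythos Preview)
The paper does not prove this theorem: it is quoted verbatim from Csikv\'ari and Frenkel \cite{CF12} and used as a black box in Section~\ref{sec:exp type}, so there is no ``paper's own proof'' to compare against.

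Your argument is correct. The vertex-deletion recursion you derive from \eqref{eq:exp type} is exactly right, and the two-part inductive hypothesis (non-vanishing plus the quantitative ratio bound (ii)) is the appropriate strengthening needed to close the induction. The telescoping reduction of (ii) to the single-vertex case, the geometric-series bound on $|\mathcal{E}|$ via the defining inequality $\sum_{v\in S,\,|S|=t}|a_1(G[S])|\le R(\Delta)^{t-1}$, and the algebra leading to $c(\kappa)=\kappa(2\kappa-1)/(\kappa-1)$ are all sound. One small remark: optimising your $c(\kappa)$ gives $\kappa^*=1+1/\sqrt{2}$ and $c(\kappa^*)=(1+\sqrt{2})^2=3+2\sqrt{2}\approx 5.83$, which is actually \emph{smaller} than the $c\approx 7.30319$ quoted from \cite{CF12}; this is not a problem (the theorem only asserts existence of some $c$), but your parenthetical ``yields the explicit numerical constant stated'' is slightly off --- your bookkeeping gives a better constant than theirs, not the same one.
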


Csikv\'ari and Frenkel showed that many well known graph polynomials are of bounded exponential type such as the chromatic polynomial, the Tutte polynomial, the modified matching polynomial and the adjoint polynomial.
Let us call a graph parameter $\chi:\G\to \C$ \emph{efficient} if it can be computed in time $2^{O(|V(G)|)}$.
We will show that we can approximate evaluations of graph polynomial of bounded exponential type using Lemmas \ref{lem:p implies f} and \ref{lem:approx}, provided $\chi$ is efficient.

\begin{theorem}\label{thm:exp type approx}
Let $\Delta\in \N$ and let $G=(V,E)$ be a graph with maximum degree at most $\Delta$.
Let $\chi:\G\to \C$ be an efficient graph parameter with $\chi(K_1)=1$ and suppose that all roots of $p_\chi(G)$ have absolute value at most $c$, for some $c>0$.
Fix $\kappa>1$. Then for any $\eps>0$ and $x\in \C$ with $|x|\geq \kappa c$ we can compute a multiplicative $\eps$-approximation to $p_\chi(G)(x)$ in time $|V|^{O(\ln |V|-\ln \eps)}$, and an additive $\eps|V|$-approximation to $\ln|p_\chi(G)(x)|$ in time $|V|^{O(\ln1/ \eps)}$. 
\end{theorem}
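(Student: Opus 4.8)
The plan is to recast the evaluation $p_\chi(G)(x)$ as the value at $z=1$ of a univariate polynomial $q$ that is zero-free on a disk of radius strictly bigger than $1$, and then apply Lemmas \ref{lem:p implies f} and \ref{lem:approx}. Concretely, set $q(z):=p_\chi(G)(x/z)\cdot z^{|V|}$, or equivalently (so that the degrees work out cleanly) define $q$ by reversing the coefficient order of $p_\chi(G)$ and rescaling by powers of $x$: writing $p_\chi(G)(y)=\sum_{i=1}^{|V|}\chi_i(G)y^i$, put $q(z):=\sum_{i=1}^{|V|}\chi_i(G)x^i z^{|V|-i}$. Then $q$ is a polynomial of degree at most $|V|-1$ and $q(1)=p_\chi(G)(x)$, while the roots of $q$ are exactly $x/\zeta$ as $\zeta$ ranges over the nonzero roots of $p_\chi(G)$; since $|\zeta|\le c$ and $|x|\ge \kappa c$, every root of $q$ has absolute value at least $\kappa>1$. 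Thus $q(z)\ne 0$ on the closed disk of radius $M$ for any $1<M<\kappa$, and in particular $q(0)\ne 0$, which is what Lemma \ref{lem:approx} requires.

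The second ingredient is computing the derivatives $\frac{d^m}{dz^m}q(z)|_{z=0}$ for $m=0,\dots,n$ with $n=O(\ln|V|-\ln\eps)$. From the explicit form of $q$, the $m$th derivative at $0$ is just $m!$ times the coefficient of $z^m$ in $q$, namely $m!\,\chi_{|V|-m}(G)\,x^{|V|-m}$. So it suffices to compute the coefficients $\chi_k(G)$ of the exponential-type polynomial. By \eqref{eq:exp type}, $\chi_k(G)$ is a sum over ordered partitions of $V$ into $k$ nonempty blocks of the product $\prod_i \chi(G[V_i])$; equivalently, one obtains all $\chi_k(G)$ at once from the (exponential) generating identity relating $p_\chi(G)$ to the single-block invariants $\chi(G[S])$ over all $S\subseteq V$. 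Since $\chi$ is efficient, each $\chi(G[S])$ is computable in time $2^{O(|S|)}=2^{O(|V|)}$, and there are $2^{|V|}$ subsets; a standard subset-convolution / dynamic-programming recursion over subsets then yields all coefficients $\chi_k(G)$ in time $2^{O(|V|)}$. This is more than enough: I only need the top $n$ coefficients, and $2^{O(|V|)}=|V|^{O(|V|)}$ is dominated by the bound claimed, but in fact one should be slightly careful and note that computing all coefficients in time $2^{O(|V|)}$, hence $|V|^{O(|V|)}$, is subsumed by $|V|^{O(\ln|V|-\ln\eps)}$ only when $\eps$ is extremely small; cleaner is to observe that we only ever need $n=O(\ln|V|-\ln\eps)$ coefficients and that each of the first $n$ coefficients $\chi_k(G)$ can itself be computed in time $2^{O(|V|)}$ by the same recursion truncated appropriately, giving total time $2^{O(|V|)}\cdot n^{O(1)}$, which I will absorb into the stated running time. (I expect this bookkeeping to need a short remark in the write-up rather than real work.)

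With these two pieces in hand the proof concludes exactly as for Theorem \ref{thm:approx tensor network}: feed the derivatives $\frac{d^m}{dz^m}q|_{z=0}$ into Lemma \ref{lem:p implies f} to obtain the derivatives of $f:=\ln q$ at $0$ in time $n^{O(1)}$, then invoke Lemma \ref{lem:approx} with $M$ any fixed number in $(1,\kappa)$ and $t=1$ (note $q=1$ lies strictly inside the zero-free disk since $|t|=1<M$): taking $n=O(\ln(\deg q/\eps))=O(\ln|V|-\ln\eps)$ gives a multiplicative $\eps$-approximation to $q(1)=p_\chi(G)(x)$, and taking $n=O(\ln 1/\eps)$ gives an additive $(\deg q)\eps=O(|V|\eps)$-approximation to $\ln|q(1)|$; rescaling $\eps$ yields the additive $\eps|V|$-approximation to $\ln|p_\chi(G)(x)|$. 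The main obstacle is the running-time accounting in the previous paragraph — ensuring that computing the needed coefficients of $p_\chi(G)$ really costs only $|V|^{O(\ln|V|-\ln\eps)}$ (equivalently, that we never need more than $O(\ln|V|-\ln\eps)$ of them and that each is computable within budget from the efficiency hypothesis on $\chi$); everything else is a direct transcription of the Barvinok-style argument already used in this section.
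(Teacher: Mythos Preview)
Your reduction to a univariate polynomial and invocation of Lemmas \ref{lem:p implies f} and \ref{lem:approx} is essentially the same as the paper's (the paper defines $q(z)=z^{|V|}p_\chi(G)(1/z)$ without the extra powers of $x$ and evaluates at $t=1/x$, but this is immaterial). The genuine gap is in your running-time bookkeeping, and it is not a cosmetic issue.

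You propose to compute the needed coefficients $\chi_{|V|-m}(G)$ for $m=0,\dots,n$ by subset convolution / dynamic programming in time $2^{O(|V|)}$, and then assert this ``absorbs'' into $|V|^{O(\ln|V|-\ln\eps)}$. It does not: $2^{c|V|}$ grows like $|V|^{c|V|/\ln|V|}$, which is far larger than $|V|^{O(\ln|V|)}$. A $2^{O(|V|)}$ preprocessing step would make the whole algorithm exponential, not quasi-polynomial, so as written the proof does not deliver the stated running time.

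The missing idea is a combinatorial one specific to the \emph{top} coefficients of $p_\chi(G)$. Recall from \eqref{eq:exp type} that $\chi_{n-m}(G)$ (with $n=|V|$) is a sum over partitions of $V$ into exactly $n-m$ nonempty blocks. In any such partition, at most $m$ blocks can have size $\ge 2$, and these non-singleton blocks together cover at most $2m$ vertices; the remaining $\ge n-2m$ vertices sit in singleton blocks, on which $\chi(K_1)=1$ contributes trivially. Hence to enumerate all such partitions it suffices to choose the at most $2m$ ``non-singleton'' vertices and then partition them into $m$ blocks, giving $n^{O(m)}$ terms; and since $\chi$ is efficient and is only ever evaluated on induced subgraphs of size $\le 2m$, each term costs $2^{O(m)}$. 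Thus $\chi_{n-m}(G)$ is computable in time $n^{O(m)}$, and summing over $m\le n=O(\ln|V|-\ln\eps)$ gives the quasi-polynomial bound. This is precisely the argument the paper uses, and it is what your proposal is missing.
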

\begin{proof}
Define the graph polynomial $q$ by 
\begin{equation}
q(G)(z):=z^{|V|}p_\chi(G)(1/z). \label{eq:transform}
\end{equation}
Note that $q(G)(z)=\sum_{i=0}^{|V|}\chi_k(G)z^{|V|-k}$.
Consequently, the roots of $q$, for $G\in \G_\Delta$, are contained in the set $A:=\{z\in \C\mid |z|> \frac{1}{c}\}$.
Note that $q(G)(0)=\chi_{|V|}=1$, as $\chi(K_1)=1$.
We will now show how to compute the derivatives of $q$.

Let us write $n:=|V|$ and define for $k=0,1,\ldots, n$, $\mathcal{P}_k$ to be the set of partitions of $V$ into exactly $k$ nonempty sets.
Then
\begin{equation}
\left.\frac{d^m}{dz^m}q(z)\right|_{z=0}=\left.\sum_{k=0}^n \sum_{\{V_1,\ldots,V_k\}\in \mathcal{P}_k} \prod_{j=1}^k \chi(G[V_j])\frac{d^m}{dz^m} z^{n-k}\right|_{z=0}.\label{eq:derivative exp type}
\end{equation}
For $m\neq n-k$ the contribution will be zero. So \eqref{eq:derivative exp type} is equal to
\begin{equation}
m!\sum_{\{V_1,\ldots,V_{n-m}\}\in \mathcal{P}_{n-m}} \prod_{j=1}^{n-m} \chi(G[V_j]).\label{eq:contribution m}
\end{equation}
No more than $m$ sets in $\mathcal{P}_{n-m}$ can have size \new{greater than or equal} to $2$.
So to enumerate $\mathcal{P}_{n-m}$, we first select a set of $n-2m$ vertices and then find all partitions into exactly $m$ sets of the remaining $2m$ vertices. This gives a total of $n^{O(m)}$ steps.
Since $\chi$ is efficient, the sum \eqref{eq:contribution m} can be computed in time $n^{O(m)}$.

Let now $t=1/x$. Then $|t|<\frac{1}{c\kappa}$.
Lemmas \ref{lem:p implies f} and \ref{lem:approx} then imply that we can compute numbers $\xi_1$ in time $n^{O(\ln (n/\eps))}$ such that $e^{-\eps}\leq |\frac{\xi_1}{q(t)}|\leq e^{\eps}$ and such that the angle between $\xi_1$ and $q(t)$ is at most $\eps$, and $\xi_2$ in time  $n^{O(1/\eps)}$ such that $\big|\ln |q(t)|-\xi_2\big|\leq \eps n$.
Then $\alpha_1:=x^{n}\xi_1$ is a multiplicative $\eps$-approximation to $p_\chi(x)$ and $\alpha_2:=\xi_2+n\ln|x|$ is an additive $\eps n$-approximation to $\ln|p_\chi(x)|$.
This finishes the proof.
\end{proof}
\new{Csikv\'ari and Frenkel \cite[Remark 5.4]{CF12} proved that the Tutte polynomial is of exponential type (it is clearly multiplicative).
It also follows from their characterization of graph polynomials of exponential type, cf. \cite[Theorem 5.1]{CF12} that the associated graph invariant $\chi$ is efficient.
Jackson, Procacci and Sokal \cite[Theorem 1.2 and 1.3]{JPS13} showed that for every fixed $v_0\in \C$ there exists a constant $C=C(\Delta,v_0)$ such that the roots of $T(G)(q,v_0)$ for any graph $G$ with maximum degree at most $\Delta$ are contained in a disk of radius $C$.
This implies that Theorem \ref{thm:tutte} follows from Theorem \ref{thm:exp type approx}.}

\section{Sparse graph limits and edge-coloring models}\label{sec:sparse limits}
In this section we will prove Theorem \ref{thm:sparse limit}.
To do this we will use the framework of Csikv\'ari and Frenkel \cite{CF12}.

Let $\Delta\in \N$, and consider for any $\eta\geq 0$ the following collection of edge-coloring models
\[
\mathcal{E}(\Delta,\eta):=\{h:\N^k\to \C: |h(\alpha)-1|< (1-\eta)\new{\frac{\beta^*(\Delta+1)}{2(\Delta+1)}}\text{ for all }\alpha\in \N^k\}.
\]
\new{Recall that $\beta^*(\Delta+1)\geq \beta^*(1)\approx 0.71885$}; see Section \ref{sec:absence}. 
Now fix some small constant $\delta>0$ and fix $h\in \mathcal{E}(\Delta,\delta)$. 
Consider the following two graph polynomials:
\begin{equation}
q(G)(z):=p(G)(I-z(I-h)) \quad \text{ and }\quad \hat q(G)(z):=k^{-|E(G)|}z^{|V(G)|}q(G)(1/z).\label{eq:define pol}
\end{equation}
Observe that $q(G)(1)=p(G)(h)=k^{|E(G)|}\hat q(G)(1)$ and that $\hat q$ is monic.

Since for $|z|<\frac{1}{1-\delta}$ we have $I+z(h-I)\in \mathcal{E}(\Delta,0)$, it follows by Corollary \ref{thm:absence of roots}a that $p(G)(I+z(h-I))\neq 0$ for all graphs of maximum degree at most $\Delta$.
This implies that $q(G)(z)\neq 0$ for all $z\in D_\delta:=\{z\in \C:|z|<\frac{1}{1-\delta}\}$ and all $G\in \G_\Delta$, which in turns implies that for all $G\in \G_\Delta$, all roots of $\hat q(G)$ are contained in the the compact set $K_\delta:=\{z\in \C: |z|\leq 1-\delta\}$.

For a graph $G$ we define $\mu_G$ to be the uniform distribution on the roots of $\hat q(G)$.
Note that for all graphs $G\in\G_\Delta$ the measures $\mu_G$ are all supported on $K_\delta$.
We have the following result:
\begin{theorem}\label{thm:cf}
Let $(G_n)$ be a locally convergent graph sequence.
For any open set $\Omega\subset \R^d$ and any continuous function $f:K_\delta\times \Omega\to \R$ that is harmonic on the interior of $K_\delta$ for any fixed $\omega\in \Omega$, and harmonic on $\Omega$ for any fixed $z\in K_\delta$, the sequence 
\[
\int_{K_\delta} f(z,\omega)d\mu_{G_n}(z)
\]
converges locally uniformly in $\omega\in \Omega$ to a harmonic function on $\Omega$.
\end{theorem}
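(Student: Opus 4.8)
The plan is to reduce Theorem~\ref{thm:cf} to a statement about convergence of the \emph{moments} of the measures $\mu_{G_n}$, and then to apply a known result of Csikv\'ari and Frenkel \cite{CF12}. First I would recall that for a measure $\mu$ supported on the compact set $K_\delta$, and a function $f(z,\omega)$ harmonic in $z$ on the interior of $K_\delta$, the quantity $\int_{K_\delta} f(z,\omega)\,d\mu(z)$ is determined — via the standard representation of harmonic functions by holomorphic functions and power series — by the ``holomorphic moments'' $\int_{K_\delta} z^j\,d\mu(z)$ (and their conjugates) together with $f$. Since $\hat q(G)$ is monic of degree $|V(G)|$ with all roots in $K_\delta$, the normalized power sums $\frac{1}{|V(G)|}\sum_i \zeta_i(G)^j$ of its roots are, up to sign and lower-order corrections, the normalized coefficients of $\hat q(G)$ expressed through Newton's identities. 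So the key reduction is: it suffices to show that for each fixed $j\in\N$ the sequence $\int_{K_\delta} z^j\,d\mu_{G_n}(z) = \frac{1}{|V(G_n)|}\sum_{i} \zeta_i(G_n)^j$ converges as $n\to\infty$.

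To establish convergence of these moments, I would trace the chain of definitions backwards. By Newton's identities, the $j$-th power sum of the roots of $\hat q(G)$ is a fixed polynomial (independent of $G$, depending only on $j$) in the first $j$ coefficients of $\hat q(G)$. By \eqref{eq:define pol} these coefficients are, up to the normalization $k^{-|E(G)|}$ and the substitution $z\mapsto 1/z$, the coefficients of $q(G)(z)=p(G)(I-z(I-h))$, i.e. exactly the derivatives $\frac{d^m}{dz^m}q(G)(z)|_{z=0}$ computed in \eqref{eq:computing derivative}: these are sums over subsets $U\subseteq V$ with $|U|=m$ of $k^{|E\setminus E(U)|}\sum_{\phi:E(U)\to[k]}\prod_{v\in U}(h-I)(\phi(\delta(v)))$. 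The point is that after dividing by $|V(G)|$ (and pulling out the global $k^{|E(G)|}$ factor, which cancels in the normalization of $\hat q$), each such coefficient becomes a sum, normalized by $|V(G_n)|$, of \emph{local} contributions: the contribution of a set $U$ depends only on the isomorphism type of the subgraph of $G_n$ induced on a bounded-radius neighbourhood of $U$ (indeed only on the induced subgraph on $U$ together with how many edges leave it), because $h-I$ evaluated at a vertex depends only on the multiset of colours on its incident edges. Hence each normalized coefficient is a fixed finite $\C$-linear combination of the densities $\operatorname{ind}(H,G_n)/|V(G_n)|$ over connected graphs $H$ of bounded size, and these converge by the definition \eqref{eq:def convergence} of local convergence. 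This gives convergence of all normalized power sums, hence of all moments $\int z^j\,d\mu_{G_n}$.

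With moment convergence in hand, the conclusion follows from \cite{CF12}: Csikv\'ari and Frenkel proved precisely that if $(\mu_{G_n})$ is a sequence of probability measures supported on a common compact set $K$ whose moments converge, then for any $f$ harmonic in $z$ and harmonic in an auxiliary parameter $\omega$ as in the statement, $\int_K f(z,\omega)\,d\mu_{G_n}(z)$ converges locally uniformly in $\omega$ to a harmonic function; the roots lying in $K_\delta$, strictly inside the region of holomorphy relevant to $\hat q$, is exactly what makes $f(z,\cdot)$ well-behaved there. So the proof structure is: (1) the roots of $\hat q(G_n)$ lie in the fixed compact $K_\delta$ (already established above from Corollary~\ref{cor:absence of roots}); (2) reduce to moment convergence via Newton's identities; (3) express normalized coefficients of $\hat q(G_n)$ as finite linear combinations of subgraph densities using \eqref{eq:computing derivative}; (4) invoke local convergence; (5) quote the harmonic-function estimability result of \cite{CF12}.

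\textbf{Main obstacle.} The genuinely delicate step is (3): making precise that the normalized coefficients of $\hat q(G_n)$ are \emph{exactly} finite linear combinations of induced-subgraph densities. One has to be careful that the sum \eqref{eq:computing derivative} over subsets $U$ with $|U|=m$, once divided by $|V(G_n)|$, really does regroup into a count of induced subgraphs weighted by local quantities — the subtlety being that $(h-I)(\phi(\delta(v)))$ and the factor $k^{|E\setminus E(U)|}$ together see the edges leaving $U$, so the relevant local structure is the induced subgraph on $U$ \emph{together with} the degrees of its vertices in $G_n$, which for $\Delta$-bounded graphs is still controlled by finitely many induced-subgraph densities on vertex sets of size at most $m$. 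Handling this bookkeeping cleanly — and observing that the global factor $k^{|E(G_n)|}$ drops out of the normalized polynomial $\hat q$ — is where the real work lies; the rest is assembling cited results.
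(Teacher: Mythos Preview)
Your outline has the right skeleton — verify the hypothesis of a result from \cite{CF12} by expressing the coefficients of $\hat q$ via induced-subgraph counts — but steps (2)--(3) as written do not go through, and this is a genuine gap rather than bookkeeping.

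The claim that ``each normalized coefficient $a_{n-k}(G_n)/|V(G_n)|$ is a fixed finite $\C$-linear combination of the densities $\ind(H,G_n)/|V(G_n)|$ over connected $H$'' is false for $k\ge 2$. The sum \eqref{eq:computing derivative} ranges over \emph{all} $k$-subsets $U\subseteq V$, and when the vertices of $U$ are pairwise far apart the summand factors as a product of $k$ nonzero single-vertex quantities; there are $\binom{n}{k}-O(n^{k-1})$ such $U$, so generically $a_{n-k}(G_n)=\Theta(n^k)$ and $a_{n-k}(G_n)/n\to\infty$. Equivalently, the $H$'s that arise are not all connected, and for disconnected $H$ the density $\ind(H,G_n)/n$ blows up. This also breaks your Newton's-identity step: $p_j$ is a \emph{nonlinear} polynomial in $e_1,\dots,e_j$, with terms like $e_1^j$ of size $\Theta(n^j)$; the fact that $|p_j|\le n(1-\delta)^j$ comes from massive cancellation that you cannot read off from convergence of the $e_k/n$ (which does not even hold).

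The paper sidesteps this by citing Theorem~4.6 of \cite{CF12} in a different form: its hypothesis is that the \emph{unnormalized} coefficients $a_{n-i}(\cdot)$ are finite linear combinations of $\ind(H,\cdot)$, with disconnected $H$ allowed. The nonlinear passage from coefficients to power sums, and the cancellation of disconnected contributions, are handled inside \cite{CF12}. The paper's work is exactly your step~(3), but stated correctly: it introduces \emph{fragments} $(H,\psi)$ recording the outgoing degrees, rewrites the $U$-sum as a sum over fragment types, and then uses M\"obius inversion on the poset of degree-vectors to express each fragment count $\ind^*(F,G)$ as a linear combination of $\ind(H',G)$ for graphs $H'$ on up to $(\Delta+1)m$ vertices (not $m$, as you wrote — you need to adjoin the outside neighbours to capture the degree data). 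Your identification of the ``main obstacle'' is on target, but the resolution requires both the larger vertex bound and dropping the connectedness and normalization claims.
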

We will prove Theorem \ref{thm:cf} below.
Let us first note that it implies Theorem \ref{thm:sparse limit} \new{by proving the following more concrete result.}
\new{\begin{theorem}\label{thm:sparse limit precise}
Let $\Delta\in \N$ and let $\delta>0$.
For any edge-coloring model $h\in \mathcal{E}(\Delta,\delta)$, the normalised partition function $n(\cdot)(h):\G_\Delta\to \R$ is estimable.
\end{theorem}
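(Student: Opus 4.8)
The plan is to reduce the statement to Theorem~\ref{thm:cf} by writing $n(G)(h)$ as an affine combination of two quantities that are each manifestly estimable. Since $h\in\mathcal{E}(\Delta,\delta)$ with $\delta>0$, Corollary~\ref{thm:absence of roots}a guarantees $p(G)(h)\neq0$ for every $G\in\G_\Delta$, so $\ln|p(G)(h)|$ is well defined; and by \eqref{eq:define pol} we have $p(G)(h)=k^{|E(G)|}\hat q(G)(1)$, so in particular $\hat q(G)(1)\neq0$. I would therefore start from
\[
n(G)(h)=\frac{|E(G)|}{|V(G)|}\ln k+\frac{\ln|\hat q(G)(1)|}{|V(G)|},
\]
and it then suffices to show that each term on the right converges along an arbitrary locally convergent sequence $(G_n)\subset\G_\Delta$.

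For the first term, observe that $\ind(K_2,G)=|E(G)|$ for a simple graph $G$, so $|E(G_n)|/|V(G_n)|=\ind(K_2,G_n)/|V(G_n)|$ converges by the very definition of local convergence \eqref{eq:def convergence}. For the second term, recall that $\hat q(G)$ is monic of degree $|V(G)|$ and that all its roots lie in the compact set $K_\delta=\{z\in\C:|z|\le1-\delta\}$. Writing $\zeta_1,\dots,\zeta_{|V(G)|}$ for these roots with multiplicity, monicity gives $\hat q(G)(1)=\prod_i(1-\zeta_i)$, hence
\[
\frac{\ln|\hat q(G)(1)|}{|V(G)|}=\frac{1}{|V(G)|}\sum_{i=1}^{|V(G)|}\ln|1-\zeta_i|=\int_{K_\delta}\ln|1-z|\,d\mu_G(z).
\]
The function $z\mapsto\ln|1-z|$ is continuous on $K_\delta$ and harmonic on its interior, because $\delta>0$ forces $1\notin K_\delta$, so $1-z$ never vanishes on $K_\delta$. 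Applying Theorem~\ref{thm:cf} with $f(z,\omega):=\ln|1-z|$ taken independent of the auxiliary parameter $\omega$ then shows that $\int_{K_\delta}\ln|1-z|\,d\mu_{G_n}(z)$ converges. Combining the two terms yields convergence of $n(G_n)(h)$, i.e. estimability of $n(\cdot)(h)$.

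The only genuine content beyond bookkeeping is checking that the hypotheses of Theorem~\ref{thm:cf} are actually in force: that the measures $\mu_{G_n}$ are supported on one fixed compact set on which the test function $\ln|1-z|$ is harmonic. Both facts have already been arranged in the text — the uniform support in $K_\delta$ comes from the zero-free disk $D_\delta$ of $q(G)$, itself a consequence of Corollary~\ref{thm:absence of roots}a, and harmonicity of $\ln|1-z|$ on $K_\delta$ is automatic once $1$ is bounded away from $K_\delta$, which is precisely why the definition of $\mathcal{E}(\Delta,\delta)$ insists on $\delta>0$. So I expect the proof to be short, with the real work having been done already in Theorem~\ref{thm:cf} and in establishing the zero-free region.
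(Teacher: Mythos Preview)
Your proof is correct and follows essentially the same route as the paper: split $n(G)(h)$ into the edge-density term and $\int_{K_\delta}\ln|1-z|\,d\mu_G(z)$, then invoke local convergence for the first and Theorem~\ref{thm:cf} for the second. The only cosmetic difference is that the paper applies Theorem~\ref{thm:cf} with the two-variable function $f(z,\omega)=\ln|\omega-z|$ on $K_\delta\times(\C\setminus K_\delta)$ and then specializes to $\omega=1$, whereas you take $f$ independent of $\omega$ from the outset; since a constant function of $\omega$ is trivially harmonic, your shortcut is perfectly valid.
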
}
\begin{proof}
Let $(G_n)$ be a locally convergent graph sequence with $G_n\in \G_\Delta$ for each $n$.
Take $\Omega=\C\setminus K_\delta$ and let $f(z,\omega):=\ln|\omega-z|$ on $K_\delta\times \Omega$. 
Then taking $1\in \Omega$, we have by Theorem \ref{thm:cf} that
\begin{equation}\label{eq:convergent at 1}
\int_{K_\delta} f(z,1) d\mu_{G_n}(z)
\end{equation}
converges.
Now note that \eqref{eq:convergent at 1} is equal to
\begin{align}
\int_{K_\delta} \ln|1-z|d\mu_{G_n}(z)&=\frac{\sum_{\zeta: \hat q(G_n)(\zeta)=0 }\ln|1-\zeta|}{|V(G_n)|}=\frac{\ln|\hat q(G_n)(1)|}{|V(G_n)|}	\nonumber
\\
&=\frac{\ln|k^{-|E(G_n)|}p(G_n,h)|}{|V(G_n)|}=n(G_n)(h)-\frac{|E(G_n)|}{|V(G_n)|}\ln k. \label{eq:convergent at 1b}
\end{align}
Now we just need to observe that since $(G_n)$ is locally convergent, we have that $\frac{|E(G_n)|}{|V(G_n)|}=\frac{\ind(K_2,G)}{2|V(G_n|)}$ converges. 
So \eqref{eq:convergent at 1b} implies that the sequence $(n(G_n)(h))$ is convergent, proving the theorem.
\end{proof}
\begin{rem}
Theorem \ref{thm:sparse limit precise} is stated only for edge-coloring models that are close to the all-ones model. Of course it also applies to edge-coloring models that strictly satisfy the conditions of Corollaries \ref{thm:absence of roots}a and b. In particular, in view of Remark \ref{rem:vertex-coloring}, the result gives a qualitative version of a result of Borgs, Chayes, Kahn and Lov\'asz \cite{BCKL12} on `right convergence'; see also \cite{LML15}.
We however leave the details to the reader.
\end{rem}

It now remains to prove Theorem \ref{thm:cf}.

\begin{proof}[Proof of Theorem \ref{thm:cf}]
We will apply Theorem 4.6 from \cite{CF12}.
Write $\hat q(G)(z)=a_0(G)+a_1(G)z^1+\ldots \new{+}a_{n}(G)z^{n}$ with $n:=|V|$.
The only requirement in \cite[Theorem 4.6]{CF12} that is left to check is that the coefficients $a_{n-i}(\cdot)$ can be expressed as a linear combination of the parameters $\ind(H,\cdot)$ for $i=0,\ldots,m$ for each fixed $m\in \N$. \new{(In fact, Csikv\'ari and Frenkel do not work with induced subgraphs, but with subgraphs, but these notions are equivalent by inclusion exclusion.)}

We will show that for any graph $G=(V,E)\in \G_\Delta$ on $n$ vertices the coefficients of $z^0,\ldots,z^{m}$ of $q(G)$ can be expressed as a linear combination, over graphs $H$ of at most $(\Delta +1)m$ vertices, of the parameters $\ind(H,G)$. 
Since $q(G)(z)=k^{-|E(G)|}\sum_{i=0}^na_{n-i} z^i$, by \eqref{eq:define pol}, this is clearly equivalent to the statement about the coefficients of $\hat q$.

By definition, $q(G)(z)$ is equal to
\[
\sum_{\phi:E\to[k]}\prod_{v\in V}(I-(z(I-h))(\phi(\delta(v)))=\sum_{\phi:E\to[k]}\sum_{i=0}^nz^{i}\sum_{\substack{U\subseteq V\\|U|=i}}\prod_{u\in U}(I-h)(\phi(\delta(v)). 
\]
So we only need to show that we can express 
\begin{equation}
\sum_{\phi:E\to[k]}\sum_{\substack{U\subseteq V\\|U|=i}}\prod_{u\in U}(I-h)(\phi(\delta(v)))=
\sum_{\substack{U\subseteq V\\|U|=i}}\sum_{\phi:E\to[k]}\prod_{u\in U}(I-h)(\phi(\delta(v)))	\label{eq:pf G(U)}
\end{equation}  
as a linear combination of the parameters $\ind(H,G)$ for each $i=0,\ldots,m$.

We need the concept of a \emph{fragment}, which is a pair $(H,\psi)$, where $H$ is a graph and where $\psi$ is a map $\psi:V(H)\to \{0,1,\ldots,\Delta\}$. 
We think of $\psi(u)$ as a number of half edges sticking out of $u$.
For $U\subseteq V$ we let $G(U)$ be the fragment $(G[U],\psi)$ where $\psi(u)$ is equal to the number of edges that connect $u$ with $V\setminus U$.
Clearly, for each $U$ of size $i$ the second sum on the right in \eqref{eq:pf G(U)} only depends on the isomorphism class of the fragment $G(U)$. (An \emph{isomorphism} from a fragment $(H,\psi)$ to a fragment $(H,\psi')$ is an isomorphism $\alpha$ of the underlying graphs such that for each $u\in V(H)$, $\psi(u)=\psi'(\alpha(u))$.)
For a fragment $F=(H,\psi)$ let $E(F)$ denote the set of edges of $F$ including half edges. 
Then define for an edge-coloring model $y$, 
\begin{equation}
p(F)(y):=\sum_{\phi:E(F)\to [k]}\prod_{v\in V(H)}y(\phi(\delta(v))).
\end{equation}
Then we can rewrite \eqref{eq:pf G(U)} as
\begin{equation}
\sum_{\substack{F=(H,\psi)\\ |V(H)|=i}} k^{|E(G)|-|E(F)|}p(F)(I-h)\ind^*(F,G),\label{eq:fragment}
\end{equation}
where the sum runs over fragments and where $\ind^*(F,G)$ denotes the number of sets $U$ of size $|V(H)|$ such that $G(U)$ is isomorphic to $F=(H,\psi)$.

So to show that \eqref{eq:fragment} can be written as a linear combination of the parameters $\ind(H,G)$ for certain graphs $H$, it suffices to show that we can do this for $\ind^*(F,G)$ for any fragment $F$.
To see this, note first that we can write
\[
\ind(H,G)=\sum_{\phi:V(H)\to \{0,1,\ldots,\Delta\}} \ind^*((H,\phi),G).
\]
Next, say that $\psi\leq \kappa$ for two maps $\psi,\kappa:V(H)\to \{0,1,\ldots,\Delta\}$ if $\psi(v)\leq \kappa(v)$ for all $v\in V(H)$. This defines a partial order on $[\Delta]^{V(H)}:=\{\psi:V(H)\to \{0,1,\ldots,\Delta\}\}$.
Let $Z$ be the associated \emph{zeta matrix}; that is, $Z$ is the $[\Delta]^{V(H)}\times [\Delta]^{V(H)}$ matrix defined by,
\[
Z_{\psi,\kappa}=\left\{\begin{array}{l}1 \text{ if } \psi \leq \kappa
													\\
													0 \text{ otherwise. }\end{array}\right.
\]
Let $a\in \C^{[\Delta]^{V(H)}}$ be the vector defined by $a_{\psi}=\ind^*((H,\psi),G)$. Then $\ind(H,G)=(Za)_{\psi_0}$, where $\psi_0$ denotes the all-zero map.
For any fixed $\psi\in [\Delta]^{V(H)}$, $(Za)_\psi$ is equal to $\sum_{H'}\ind(H',G)$, where this sum runs over all graphs $H'$ on $|V(H)|+\sum_{v\in V(H)}\psi(v)$ vertices containing $H$ as an induced subgraph such that for each $v\in V(H)$ this vertex has exactly $\psi(v)$ extra neighbours in $H'$. 
It is well-known (by M\"obius inversion) that $Z$ is invertible. 
Hence $a=Z^{-1}(Za)$, which gives for each fragment $(H,\psi)$ an expression of $\ind^*((H,\psi),G)$ as a linear combination of $\ind(H',G)$, where $H'$ contains at most $(\Delta+1)m$ vertices.
This finishes the proof.
\end{proof}

\section{Concluding remarks}\label{sec:remarks}
In this paper we built on ideas of Barvinok and Sober\'on \cite{BS14a} to find regions where the partition functions of an edge-coloring models does not evaluate to zero. We moreover used a technique of Barvinok \cite{B14a} to transform these zero-free regions into quasi-polynomial time approximation algorithms for computing partition functions, as well as for evaluating graph polynomials.
This work leads to some questions:
\begin{itemize}
\item As is indicated by Example \ref{ex:ind}, our results are not optimal. 
Can one determine more precisely the region where the partition functions does not evaluate to zero on bounded degree graphs?
\new{Recent work of Barvinok \cite{B16} indicates that also zero-free regions that are not shaped like a ball are interesting. In \cite{B16} he used a strip argument to find a quasi polynomial time algorithm to approximate the permanent of a real matrix $A$ for which $\delta \leq A_{i,j}\leq 1$ for some fixed $\delta>0$.}
\item Can zero-free regions be used in any way to yield (randomized) polynomial time approximation algorithms?
\end{itemize}


\begin{ack}
I thank Alexander Barvinok for useful suggestions for the proof of Theorem \ref{thm:absence of roots}.
I am also grateful to Bart Litjens, Sven Polak, Lex Schrijver and Bart Sevenster for their comments.
Moreover, I am grateful for useful comments of the anonymous referees.

The research leading to these results has received funding from the European Research Council
under the European Union's Seventh Framework Programme (FP7/2007-2013) / ERC grant agreement
n$\mbox{}^{\circ}$ 339109 as well as from a personal NWO VENI grant.
\end{ack}

\end{document}